\newtheorem{thm}{Theorem}[section]
 \newtheorem{cor}[thm]{Corollary}
 \newtheorem{lem}[thm]{Lemma}
 \newtheorem{prop}[thm]{Proposition}
 \theoremstyle{definition}
 \theoremstyle{remark}
 \newtheorem{ex}[thm]{Example}
 \numberwithin{equation}{section}
\newcommand{\om}{\ensuremath{\Omega}}
\newcommand{\cd}{\ensuremath{\mathbb{C}^d}}
\newcommand{\p}{\ensuremath{\partial}}
\newcommand{\eps}{\ensuremath{\varepsilon}}
\newcommand{\mam}{Monge-{A}mp\`ere measure }
\newcommand{\supp}{\text{supp} }
\begin{document}

\title{Weighted  $\theta$-Incomplete  Pluripotential Theory}
\subjclass{32U20, 32U15 }%
\keywords{Weighted pluripotential theory, $\theta$-incomplete pluripotential theory }%
\address{Indiana University, Bloomington, IN 47405 USA}
\email{malan@indiana.edu}

\author{Muhammed Al\.I    ALAN}
\date{10 February 2009}
\maketitle
\begin{abstract} Weighted pluripotential theory is a rapidly   developing
area; and Callaghan \cite{Callaghan} recently introduced
$\theta$-incomplete polynomials in \cd\, for $d>1$. In this paper
we  combine these two theories by defining weighted
$\theta$-incomplete pluripotential theory. We define weighted
$\theta$-incomplete extremal functions  and obtain a
Siciak-Zahariuta type equality in terms of $\theta$-incomplete
polynomials. Finally we prove that the extremal functions can be
recovered using orthonormal polynomials and we demonstrate a result on strong asymptotics of Bergman functions in the spirit of \cite{BermanCn}.
\end{abstract}

\section{Introduction}
The theory of $\theta$-incomplete polynomials in \cd\, for $d>1$ was recently
developed by Callaghan \cite{Callaghan}. It has many applications in approximation
theory. He also defined interesting extremal functions in terms of
$\theta$-incomplete polynomials and related plurisubharmonic functions.

This paper has three goals. The first one is to further develop the
$\theta$-incomplete pluripotential theory of Callaghan. The second goal is to combine
this theory with weighted pluripotential theory and get a unified theory by defining
weighted $\theta$-incomplete pluripotential theory in \cd. If  $\theta=0$, we get
weighted pluripotential theory, and  for the weight $w=1$, we get
$\theta$-incomplete pluripotential theory. Finally we show that extremal functions in
these settings can be recovered asymptotically using orthonormal polynomials.

In this section  we recall some definitions and major results of weighted pluripotential theory and we recall Berman's paper
\cite{BermanCn} which is  a special case of weighted pluripotential theory. Our initial goal was to study  Berman's recent work
on globally defined weights within the framework of $\theta$-incomplete pluripotential theory. We were able to prove  many   results for admissible weights defined on closed subsets of \cd.

In the second section we recall some important results of $\theta$-incomplete
pluripotential theory. We improve a result of Callaghan and we extend a  result of
Bloom and Shiffman \cite{BloomShiffman} to the $\theta$-incomplete extremal
function $V_{K,\theta} $ associated to a compact set $K$ for $0\leq \theta <  1$.

In the third section we work on closed subsets of \cd.  We define the weighted $\theta$-incomplete extremal function
$V_{K,Q,\theta}$ for a closed set $K$ and an admissible weight function $w$ and we give various properties  of this
extremal function. We also show that  $V_{K,Q,\theta}$ can be obtained via taking the supremum of $\theta$-incomplete
polynomials whose weighted norm is less then or equal to 1 on $K$, generalizing the analogous result for $V_{K,\theta}$ (unweighted
case) from the previous section. In particular we state  analogous results in the case of global weights.

In the last section we recall the Bernstein-Markov property relating the sup norms and $L^2(\mu)$ norms of polynomials on
a compact set $K$ with measure $\mu$. We define a version of the Bernstein-Markov property for $\theta$-incomplete
polynomials in the weighted setting. Then we prove results on asymptotics of orthonormal polynomials to extremal functions in
the $\theta$-incomplete and weighted setting. Finally in Theorem \ref{SonTheorem}, we prove a result on strong
asymptotics of Bergman functions analogous to the main theorem in ~\cite{BermanCn}.

\subsection{Weighted Pluripotential Theory}\label{WeightedPT}

We give some basic definitions from  weighted pluripotential
theory. A good reference is Saff and Totik's book
\cite{Saff-Totik} for  $d=1$ and Thomas Bloom's Appendix B of
\cite{Saff-Totik} for $d>1$.

Let $K$ be a non-pluripolar closed subset of \cd.  An upper semicontinuous    function $w:K\to [0,\infty)$ is called an
admissible weight function on $K$ if
\begin{itemize}
\item[i)] the set $\{z \in K \, |\, w(z) >0 \}$  is not pluripolar and
\item[ii)] If $K$ is unbounded,   $|z|w(z)\to 0 $ as  $|z|\to \infty,\,z\in K$.
\end{itemize}
We define  $Q = Q_w = - \log w $, and we will use $Q$ and $w$ interchangeably.

The weighted pluricomplex extremal function of $K$ with respect to $Q$  is defined as
\begin{equation}
V_{K,Q}(z):= \sup\left\{ u(z) \,|\, u\in L, \, u\leq Q \text{ on } K\right\},
\end{equation}
where the Lelong class $L$ is defined as
\begin{equation}
 L:=\{ u\, |\, u \text{ is plurisubharmonic on } \cd, u(z) \leq \log^+ |z| +C \}.
\end{equation}

We recall that the  upper semicontinuous regularization of a function $v$ is defined by $v^\ast(z):=\limsup\limits_{w\to z}v(w)$ and it is well known that the upper semicontinuous regularization of $V_{K,Q}$ is plurisubharmonic and in $L^+$ where
$$L^+ := \{u \in L\, |\, \log^+ |z| +C \leq  u(z) \}.$$
By Lemma 2.3 of  Bloom's  Appendix B of \cite{Saff-Totik}, the support, $S_w$, of
$(dd^cV_{K,Q}^\ast)^d$ is a subset of $S_w^\ast:=\{z \in K\,|\, V_{K,Q}^\ast(z)
\geq Q(z)\}$.

Here $dd^c v= 2 i \p \bar{\p} v $ and $(dd^c v)^d$   is the complex Monge-{A}mp\`ere operator defined  by $(dd^c
v)^d=dd^c v \wedge \dots \wedge dd^c v$ for plurisubharmonic functions which are $\mathcal{C}^2$. For the cases
considered in this paper see \cite{Klimek, Demailly} for the details of the definition.

A set $E$ is called pluripolar if $E\subset \{z\in \cd \,|\,
u(z)=-\infty\}$ for some plurisubharmonic function $u$. If a
property holds everywhere except on a pluripolar set we will say
the property holds quasi everywhere.

 \subsection{A Special Case of Weighted Pluripotential Theory}

We recall some definitions from Berman's paper \cite{BermanCn}, where the weight is defined globally in \cd. Let $\phi$ be a
lower semicontinuous function, and $\phi(z) \geq (1+\eps)\log|z| \text{ for } z \gg 1$ for some fixed $\eps >0$. The
weighted extremal function is defined as
\begin{equation}
 V_{\phi}(z):=\sup\{ u(z)\,|\, u \in L \text{ and } u \leq \phi \text{ on } \cd\}.
\end{equation}
We define
\begin{eqnarray}
  S^\ast_\phi &:=&  \{z\in \cd \, |\, V_{\phi}^\ast(z)\geq \phi(z) \} \text{ and }\\
  S_\phi &:=&  \supp((dd^c V^\ast_{\phi})^d).
\end{eqnarray}

This is a special case of weighted pluripotential theory with $K=\cd$ and $Q=\phi$.
Hence $S_\phi \subset S^\ast_\phi$.

Berman \cite{BermanCn} studied the   case where the global weight $\phi \in
\mathcal{C}^{1,1}(\cd)$. In this case we define
\begin{eqnarray}
  D_\phi &=&  \{z\in \cd \, |\, V_{\phi}(z)= \phi(z) \},\\
  P      &=&  \{z\in \cd \, |\, dd^c {\phi}(z) \text{ exist and is positive}\}.
\end{eqnarray}
We remark that $D_\phi$ is a compact set and $S_\phi \subset D_\phi$. By Proposition 2.1 of \cite{BermanCn}, if $\phi\in
\mathcal{C}^{1,1}(\cd)$, then we have $V_{\phi}\in \mathcal{C}^{1,1}(\cd)$ and  $(dd^c V_{\phi})^d= (dd^c {\phi})^d$ on $D_\phi \cap P$ almost everywhere as $(d,d)$ forms with $L^\infty $ coefficients.

\begin{ex} Let $\phi(z)=|z|^2$.  Then we have
\begin{equation}\label{V0} V_{\phi}(z)=\left\{
\begin{array}{cl}
  |z|^2   & \text{ if } |z|\leq \frac{1}{\sqrt{2}},  \\
  \log|z| +\frac{1}{2}-\frac{1}{2}\log \frac{1}{2} &\text{ if } |z|\geq
  \frac{1}{\sqrt{2}}.
 \end{array}\right. \end{equation}\\
Clearly the plurisubharmonic function, $V$, on the right hand side is less then or equal to ~$\phi$, hence $V\leq V_{\phi}$.
On the other hand the support of the \mam of ~$V$  is the closed ball of radius $1/\sqrt{2}$ centered at the origin. Since
any competitor, $u$, for the extremal function is less then or equal to $|z|^2$ on this closed ball, by the domination
principle, (see Appendix B of \cite{Saff-Totik} or Theorem \ref{ThetaDominationPrinciple} below) $u$ is less then or equal
to ~$V$ on \cd. Therefore $V_\phi \leq V$ and hence equality holds.
\end{ex}

\section{$\theta$-Incomplete Pluripotential Theory}
We recall the basic notions
of $\theta$-incomplete pluripotential theory from \cite{Callaghan}. We fix $0\leq \theta
\leq 1$. A $\theta$-incomplete polynomial in \cd\,  is a
polynomial of the form
\begin{equation}\label{ThetaPolynomial}
P(z)=\sum\limits_{|\alpha|=\lceil N\theta\rceil}^N c_\alpha z^\alpha ,
\end{equation}
where $\lceil x \rceil$ is the least integer greater than or equal to $x$. Here we use the following multi-index notations. Let
$z=(z_1,\dots,z_d)\in \cd \text{ and } \alpha=(\alpha_1,\dots, \alpha_d)\in\mathbb{N}^d, \text{ then }$ %
$$z^\alpha = z_1^{\alpha_1} z_2^{\alpha_2}\dots  z_d^{\alpha_d} \quad \text{ and } \quad
 |\alpha| = \alpha_1+\dots +\alpha_d $$

The set of all $\theta$-incomplete polynomials of the form \eqref{ThetaPolynomial}
will be denoted by $\pi_{N,\theta}$. We remark that when $\theta=0$,
$\pi_{N,\theta}$  is the set of all polynomials of degree at most $N$; and when $\theta=1$,
$\pi_{N,\theta}$  is the set of homogenous polynomials of degree $N$.

Related classes of plurisubharmonic functions are defined as follows (See \cite{Callaghan} for details).
\begin{eqnarray}
 L_\theta     &=&\{ u\in L\, |\, u(z) \leq \theta \log |z| +C \text{ for } |z|<1 \}, \\
 L_\theta^+  &=&\{ u\in L_\theta\, |\,  \max(\theta \log |z|, \log |z|) +C \leq u(z) \text{ for all  } z\in
\cd\}.
\end{eqnarray}
We remark that if $P\in \pi_{N,\theta}$ then $\frac{1}{N}\log |P| \in L_\theta$. Another observation is if  $\theta_1 \geq
\theta_2$, then $ L_{\theta_2}\subset L_{\theta_1}$.

The next theorem gives a domination principle for $L_\theta$ classes.
\begin{thm}\label{ThetaDominationPrinciple}\cite[Theorem 3.15]{Callaghan} Let $0\leq\theta<1$. If $u\in L_\theta \text{
and } v\in L_\theta^+$ and if $u\leq v$ holds  almost everywhere with respect to $(dd^c v)^d$, then $u\leq v \text{ on }
\cd$.
\end{thm}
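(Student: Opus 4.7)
The plan is to adapt the classical Bedford--Taylor domination principle for the Lelong class $L$ (as in Appendix B of \cite{Saff-Totik}) to the $\theta$-incomplete setting. The key ingredients will be a total Monge--Amp\`ere mass identity for functions in $L_\theta^+$, the localization lemma $(dd^c\max(u,v))^d=(dd^c u)^d$ on the open set $\{u>v\}$, and Bedford--Taylor's comparison principle on a large ball.

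As a preliminary step, I would establish the mass identity: for every $v\in L_\theta^+$,
\[
\int_{\cd}(dd^c v)^d = M_{d,\theta},
\]
where $M_{d,\theta}$ is a positive constant depending only on $d$ and $\theta$. This follows from the forced $\log|z|+O(1)$ asymptotic of $v$ at infinity together with the $\theta\log|z|+O(1)$ lower bound near the origin, and parallels the $(2\pi)^d$ mass identity for functions in $L^+$.

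Next I would argue by contradiction: suppose $E:=\{u>v\}$ is nonempty. Because $u\in L_\theta\subset L$ gives $u(z)\le \log|z|+C_u$ while $v\in L_\theta^+$ gives $v(z)\ge \log|z|+C_v$ for $|z|\ge 1$, there is a ball $B_R$ with $u\le v$ on $\cd\setminus B_R$, so $E$ is bounded and $E\subset B_R$. Form $w:=\max(u,v)\in L_\theta$; since $w=v$ outside $B_R$, also $w\in L_\theta^+$, and by the identity of the previous step $\int(dd^c w)^d=M_{d,\theta}$. On the open set $E$ we have $w=u$, hence $(dd^c w)^d=(dd^c u)^d$ there, while on the open set $\{u<v\}$ we have $w=v$ and $(dd^c w)^d=(dd^c v)^d$. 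A standard boundary-mass estimate on $\{u=v\}$ (cf.\ Appendix B of \cite{Saff-Totik}) yields
\[
M_{d,\theta}=\int(dd^c w)^d\ge \int_E(dd^c u)^d+\int_{E^c}(dd^c v)^d.
\]
The hypothesis $(dd^c v)^d(E)=0$ then forces $\int_{E^c}(dd^c v)^d=M_{d,\theta}$, so $\int_E(dd^c u)^d=0$.

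To finish, I would invoke Bedford--Taylor's comparison principle on $B_R$ applied to $u$ and $v+t$ for a small parameter $t>0$: the set $\{u>v+t\}$ is relatively compact in $B_R$ by the growth separation at infinity, and the comparison principle gives
\[
\int_{\{u>v+t\}}(dd^c v)^d\le \int_{\{u>v+t\}}(dd^c u)^d\le \int_E(dd^c u)^d=0.
\]
Letting $t\to 0^+$ and combining with the vanishing of $(dd^c v)^d$ on $E$, one concludes that $E$ is pluripolar. Since both $u$ and $v$ are plurisubharmonic and $v$ is in particular finite on $\cd$ outside a pluripolar set, upper semicontinuity of $u-v$ together with the fact that any psh function is determined by its values off a pluripolar set upgrades ``$u\le v$ quasi-everywhere'' to $u\le v$ on all of $\cd$, the desired contradiction with $E\ne\emptyset$.

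The main obstacle is the preliminary mass identity: unlike the classical $L^+$-case, functions in $L_\theta^+$ carry Monge--Amp\`ere mass coming from both the $\log|z|$-growth at infinity and the $\theta\log|z|$-growth near the origin, so $M_{d,\theta}$ must be identified correctly and shown independent of the particular $v\in L_\theta^+$. A subsidiary technical point is justifying the boundary-mass inequality $(dd^c\max(u,v))^d\ge \mathbf{1}_{\{u\le v\}}(dd^c v)^d$ when $u,v$ are only psh (possibly with $-\infty$ loci); this is handled via Demailly-type approximation by decreasing sequences of locally bounded psh functions and continuity of the Monge--Amp\`ere operator along such sequences.
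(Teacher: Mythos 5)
This statement is imported into the paper without proof: it is quoted as Theorem~3.15 of \cite{Callaghan}, so there is no in-paper argument to compare your proposal against; I can only assess it on its own terms.

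Your preliminary steps are sound and are indeed the right ingredients for a Bedford--Taylor style argument: the constancy of the total Monge--Amp\`ere mass over $L_\theta^+$ (which is $(2\pi)^d$, the same as for $L^+$, not a genuinely $\theta$-dependent constant, since only the logarithmic growth at infinity governs the total mass while the pole at the origin merely redistributes part of it there); the boundedness of $E=\{u>v\}$ from the growth separation; the membership $\max(u,v)\in L_\theta^+$; and the Demailly-type inequality for $(dd^c\max(u,v))^d$ leading, via the mass identity and the hypothesis $(dd^cv)^d(E)=0$, to $\int_E(dd^cu)^d=0$. The error is in the finishing step. Having shown that both $(dd^cu)^d$ and $(dd^cv)^d$ assign zero mass to $E$, you invoke the comparison principle on $B_R$ for $u$ and $v+t$, but the displayed inequality is (a) the reverse of what Bedford--Taylor comparison actually yields under the boundary condition $u\le v+t$ near $\partial B_R$, and (b) vacuous in any case, because both sides are already known to vanish. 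More importantly, vanishing of the two Monge--Amp\`ere measures on $E$ simply does not imply that $E$ is pluripolar: maximality of $u$ and of $v$ on an open subset of $E$ says nothing about the Lebesgue or capacitary size of $E$. (Already in one variable the correct conclusion $u\le v$ is obtained by the maximum principle/Phragm\'en--Lindel\"of, not by any pluripolarity argument.) A further refinement --- typically a perturbation by a uniformly strictly plurisubharmonic element of the relevant class, or an envelope/maximality characterization --- is needed to pass from the mass equalities to $E=\emptyset$, and this is precisely where Callaghan's actual proof does the work that your sketch does not. The secondary technical point you flagged (justifying the $\max$-localization inequality for psh functions with a $-\infty$ locus at the origin) is real but manageable by Demailly's theory of Monge--Amp\`ere operators with isolated log poles; it is not the obstruction.
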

We remark that for $0<\theta<1$, we have $u(0)=v(0)=-\infty$ and the origin is a distinguished point as  it is charged by $(dd^c v)^d$.

Callaghan \cite{Callaghan}  defined the following extremal function for a   set $E
\subset \cd$:
\begin{equation}
 V_{E,\theta}(z):=sup\{ u(z): u \in L_\theta \text{ and } u \leq 0 \text{ on }E\}.
\end{equation}
 We will call it the $\theta$-incomplete extremal function of $E$.
The  upper semicontinuous  regularization,$V^\ast_{E,\theta}$, is in $ L_\theta^+ $
if $E$ is not pluripolar by  Lemma 3.7 of \cite{Callaghan}. Also if $K$ is a regular
compact set  in \cd, then $V^\ast_{K,\theta}=V_{K,\theta}$.  Hence it is continuous
 except at $z=0$. Here regular means the extremal function of $K$,
 $V_K:=V_{K,0}$  is continuous.  We  remark that  $(dd^c V_{E,\theta}^\ast)^d$ is supported in  $\bar{E}\bigcup\{0\}$.

According to   \cite{Callaghan} we have  the following result for compact sets $K$,
\begin{equation}\label{CallahanPhi} V_{K,\theta}= \log \Phi '_{K,\theta},\end{equation} where $$\Phi
'_{K,\theta}(z)=\sup\{|f(z)|^{1/N}:f\in \pi_{N,\theta}\text{ for some } N\geq 1
,\|f\|_K\leq1\}.$$ We  define the following functions for a compact set $K$. For
$N\geq 1$ we let
\begin{eqnarray}
\Phi_{K,\theta,N}(z)&=&\sup\{|f(z)|:f\in \pi_{N,\theta},\|f\|_K\leq1\} \text{
and}\\
\label{apple}\Phi_{K,\theta}&=&\sup\limits_{N}(\Phi_{K,\theta,N})^{1/N}.
\end{eqnarray}
The next proposition shows that the supremum in \eqref{apple} is actually a limit.

\begin{prop}\label{SupLim}With the above notation we have
$$\sup\limits_{N}\frac{1}{N}\log\Phi_{K,\theta,N}=\lim\limits_{N\to \infty}\frac{1}{N}\log\Phi_{K,\theta,N} \text{ and  } \Phi
'_{K,\theta}=\Phi_{K,\theta}.$$
Hence we have $\lim\limits_{N\to
\infty}\frac{1}{N}\log\Phi_{K,\theta,N}= V_{K,\theta}.$
\end{prop}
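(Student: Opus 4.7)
The plan is to establish the two equalities separately and then combine with \eqref{CallahanPhi}.

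\textbf{Step 1: $\Phi'_{K,\theta}=\Phi_{K,\theta}$ by rearrangement of the supremum.} This is essentially a book\-keeping observation. Writing the defining supremum of $\Phi'_{K,\theta}(z)$ as an iterated supremum first over all admissible $f\in\pi_{N,\theta}$ for fixed $N$ and then over $N\geq 1$, one obtains $\sup_N (\Phi_{K,\theta,N}(z))^{1/N}=\Phi_{K,\theta}(z)$. No analysis is needed here.

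\textbf{Step 2: superadditivity of $N\mapsto \log \Phi_{K,\theta,N}(z)$.} The key structural fact is the inclusion $\pi_{N,\theta}\cdot \pi_{M,\theta}\subset \pi_{N+M,\theta}$. Indeed, if $f\in\pi_{N,\theta}$ and $g\in\pi_{M,\theta}$, the polynomial $fg$ has total degree $\leq N+M$ and every monomial appearing in $fg$ has degree at least $\lceil N\theta\rceil+\lceil M\theta\rceil$, which is $\geq \lceil (N+M)\theta\rceil$ since ceilings satisfy $\lceil x\rceil+\lceil y\rceil\geq \lceil x+y\rceil$. Since moreover $\|fg\|_K\leq \|f\|_K\|g\|_K\leq 1$, taking $f,g$ competitors in the definitions of $\Phi_{K,\theta,N}(z)$ and $\Phi_{K,\theta,M}(z)$ and letting $fg$ be a competitor for $\Phi_{K,\theta,N+M}(z)$ gives, after taking suprema,
\begin{equation*}
\Phi_{K,\theta,N+M}(z)\ \geq\ \Phi_{K,\theta,N}(z)\,\Phi_{K,\theta,M}(z).
\end{equation*}
Equivalently, $a_N(z):=\log \Phi_{K,\theta,N}(z)$ satisfies $a_{N+M}(z)\geq a_N(z)+a_M(z)$.

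\textbf{Step 3: Fekete's lemma.} Applied pointwise in $z$ to the superadditive sequence $\{a_N(z)\}_N$, Fekete's lemma yields
\begin{equation*}
\lim_{N\to\infty}\frac{a_N(z)}{N}\ =\ \sup_N \frac{a_N(z)}{N}\ \in\ (-\infty,+\infty],
\end{equation*}
which is exactly the first claim of the proposition. Combined with Step~1 this gives $\Phi_{K,\theta}(z)=\lim_{N\to\infty}(\Phi_{K,\theta,N}(z))^{1/N}$, and together with \eqref{CallahanPhi} we conclude $\lim_N \tfrac{1}{N}\log \Phi_{K,\theta,N}=V_{K,\theta}$.

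The only non-routine point is the verification that products of $\theta$-incomplete polynomials remain $\theta$-incomplete with the correct lower-degree bound; this rests on the elementary ceiling inequality $\lceil x\rceil+\lceil y\rceil\geq \lceil x+y\rceil$. Once this is in place, the rest of the argument reduces to the standard Fekete pattern familiar from classical pluripotential theory.
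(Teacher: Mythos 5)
Your proof is correct and follows essentially the same route as the paper: establish superadditivity of $N\mapsto\log\Phi_{K,\theta,N}$ via the ceiling inequality $\lceil\theta J\rceil+\lceil\theta I\rceil\geq\lceil\theta(J+I)\rceil$, invoke Fekete's lemma (the paper cites Theorem 4.9.19 of Berenstein--Gay for this), and combine with Callaghan's identity \eqref{CallahanPhi}. Your explicit observation that $\Phi'_{K,\theta}=\Phi_{K,\theta}$ is pure bookkeeping (rearrangement of a double supremum) is a minor improvement in exposition over the paper, which somewhat misleadingly attributes that equality to Callaghan's result.
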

\begin{proof}
First of all we have $\Phi_{K,\theta,J}\,\Phi_{K,\theta,I}\leq\Phi_{K,\theta,J+I}$ for all integers $I, J\geq 0$. For if
$P(z)=\sum\limits^J_{|\alpha|=\lceil\theta J\rceil} a_\alpha z^\alpha$ and $Q(z)=\sum\limits^I_{\alpha =\lceil\theta I
\rceil} b_\alpha z^\alpha$, then $PQ(z)=\sum\limits^{J+I}_{\alpha=\lceil\theta J\rceil+\lceil\theta I\rceil} c_\alpha
z^\alpha$ is in $\pi_{J+I,\theta}$, since $\lceil\theta J\rceil+\lceil\theta I\rceil\geq\lceil\theta( J+I)\rceil$.

By taking logarithms  we get
\begin{equation}\label{LogInequality}
\log \Phi_{K,\theta,J}\,+\log \Phi_{K,\theta,I}\leq\log \Phi_{K,\Phi,J+I},
\end{equation}
so by Theorem 4.9.19 of \cite{BerensteinGay}, $\lim\limits_{N\to \infty}\frac{1}{N}\log\Phi_{K,\theta,N}$ exists and
equals $\sup\limits_{N}\frac{1}{N}\log\Phi_{K,\theta,N}$. Now by Callaghan's result \eqref{CallahanPhi} we get the last
equality $\Phi '_{K,\theta}=\Phi_{K,\theta}$.\end{proof}

In the next section we will extend this result to the weighted case.  This proposition
also fixes a gap in the proof of Theorem 8.2 in \cite{CallaghanThesis} and we will use
it in the proof of Theorem \ref{Asymptotic}.

The following theorem extends a result of Bloom and Shiffman \cite{BloomShiffman} to the $\theta$-incomplete case.

\begin{thm}\label{BSuniform} Let $K$ be a regular compact set in $\cd$. Then
 $$\frac{1}{N}\log\Phi_{K,\theta,N}\rightarrow V_{K,\theta}$$
 uniformly on compact subsets of $\cd\setminus\hat{K}_{\theta}$.
\end{thm}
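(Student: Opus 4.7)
The plan is to combine a Dini-type argument with the submultiplicativity already exploited in Proposition \ref{SupLim}. I would first verify continuity of the two relevant functions on $\cd\setminus\{0\}$. The remarks preceding the theorem give $V_{K,\theta}=V_{K,\theta}^\ast$ continuous on $\cd\setminus\{0\}$ because $K$ is regular. For $u_N:=\frac{1}{N}\log\Phi_{K,\theta,N}$, non-pluripolarity of $K$ makes $\|\cdot\|_K$ a genuine norm on the finite-dimensional space $\pi_{N,\theta}$, so its unit ball is compact and $\Phi_{K,\theta,N}$ is a continuous maximum of continuous functions $|f(z)|$; for $z\ne 0$ some monomial $z^\alpha$ with $\lceil N\theta\rceil\leq|\alpha|\leq N$ is nonzero at $z$, so $\Phi_{K,\theta,N}(z)>0$ and $u_N$ is continuous and finite on $\cd\setminus\{0\}$.

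Next I would introduce the running maxima $w_N:=\max(u_1,\ldots,u_N)$, which are continuous on $\cd\setminus\{0\}$ and increase pointwise to $V_{K,\theta}$ by Proposition \ref{SupLim}. Since $V_{K,\theta}(0)=-\infty\leq 0$, the origin lies in $\hat K_{\theta}$, so any compact $F\subset\cd\setminus\hat K_{\theta}$ keeps a positive distance from $0$. Dini's theorem then yields, for each $\eps>0$, an index $N_1$ with $V_{K,\theta}-\eps/2\leq w_{N_1}\leq V_{K,\theta}$ uniformly on $F$. The trivial upper bound $u_M\leq V_{K,\theta}$ also holds, since any $\frac{1}{N}\log|f|$ with $f\in\pi_{N,\theta}$ and $\|f\|_K\leq 1$ lies in $L_\theta$ (inspect the growth of $|f|$ near $0$ and at $\infty$) and is nonpositive on $K$, hence competes in the definition of $V_{K,\theta}$.

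The delicate step is to upgrade the uniform control from $w_N$ to $u_M$ itself. The submultiplicativity $\Phi_{K,\theta,I}\Phi_{K,\theta,J}\leq\Phi_{K,\theta,I+J}$ from Proposition \ref{SupLim} gives, upon writing $M=qk+r$ with $0\leq r<k$ and $1\leq k\leq N_1$,
\[
u_M\;\geq\;\frac{qk}{M}\,u_k+\frac{r}{M}\,u_r
\]
(the second term being absent when $r=0$). Because $u_1,\ldots,u_{N_1}$ are continuous on the compact $F$, the quantity $C:=\max_{1\leq j\leq N_1}\sup_F|u_j|$ is finite, and the above inequality rearranges to $u_M\geq u_k-(2N_1/M)\,C$ uniformly on $F$ for every $k\leq N_1$. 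Choosing $M_0$ so that $(2N_1/M_0)C<\eps/2$ and then taking the maximum over $k$, we obtain $u_M\geq w_{N_1}-\eps/2\geq V_{K,\theta}-\eps$ on $F$ for all $M\geq M_0$. Combined with $u_M\leq V_{K,\theta}$, this is the desired uniform convergence. The main obstacle is precisely this transfer step: without the submultiplicativity and the finiteness of $C$ (secured by keeping $F$ away from the origin, i.e.\ away from $\hat K_\theta$, where the early $u_k$ would blow down to $-\infty$), Proposition \ref{SupLim} yields only pointwise information about $u_M$.
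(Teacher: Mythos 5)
Your proof is correct, and it takes a genuinely different route from the paper's. The paper follows Bloom--Shiffman: it first shows $\Phi_{K,\theta,N}>1$ eventually on $E$, uses only \emph{lower} semicontinuity of $\psi_N=\frac{1}{N}\log\Phi_{K,\theta,N}$, and then runs an explicit pointwise covering argument with carefully chosen indices $N_a$ and $N_1=\max(N_{a_i}^2+N_{a_i})$. You instead observe that $\Phi_{K,\theta,N}$ is actually \emph{continuous}: since $\|\cdot\|_K$ is a norm on the finite-dimensional space $\pi_{N,\theta}$, its unit ball is compact and $\Phi_{K,\theta,N}$ is a max of $|f(z)|$ over a compact family, hence continuous, and strictly positive for $z\neq 0$. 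This lets you replace the covering argument by Dini's theorem applied to the running maxima $w_N$, with the same submultiplicativity $\Phi_{K,\theta,J}\Phi_{K,\theta,I}\leq\Phi_{K,\theta,I+J}$ then used to transfer the uniform bound from $w_{N_1}$ to every $u_M$ with $M$ large, via the division-algorithm decomposition $M=qk+r$ and a uniform bound $C$ on the finitely many early $u_j$ over $F$. The transfer step neatly sidesteps the need for the $\psi_j>0$ positivity used in the paper, because you bound $|u_r|$ by $C$ instead of discarding the remainder term.

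Two small remarks. First, the line ``$V_{K,\theta}(0)=-\infty$'' implicitly assumes $\theta>0$; for $\theta=0$ the constants lie in $\pi_{N,0}$, so $\Phi_{K,0,N}>0$ everywhere and you do not even need $F$ to avoid the origin. Second, and more interestingly, your argument as written establishes uniform convergence on every compact $F\subset\cd\setminus\{0\}$ (continuity of $V_{K,\theta}$ and of the $u_N$ off the origin is all that Dini needs, and the transfer needs only $F\cap\{0\}=\emptyset$ to make $C$ finite). Since $\hat K_\theta\supset K\cup\{0\}$, this is strictly stronger than the statement of the theorem, which restricts to $\cd\setminus\hat K_\theta$; this matches what is already known in the unweighted $\theta=0$ case (Siciak--Klimek), so the stronger conclusion is consistent and your method exposes it cleanly. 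What the paper's Bloom--Shiffman route buys in exchange is that it is a direct port of a known potential-theoretic argument, requiring only lower semicontinuity of the $\Phi$'s; your route is more elementary and yields more, at the modest cost of the compactness-of-the-unit-ball observation that the paper does not make.
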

Here  $\hat{K}_{\theta}$ is the $\theta-$incomplete hull of $K$  defined for a
compact set $K$ as
\begin{equation}
\hat{K}_{\theta}=\{z\in \cd\, |\, |p(z)|\leq \|p\|_K \text{ for all } p\in
\pi_{N,\theta} \text{ for  } N=0, 1,\dots \,\}.
\end{equation}
It is clear that for $\theta>0$, the origin always belongs to $\hat{K}_{\theta}$ for
any set $K$, so $\hat{K}_{\theta}$ is often larger then the usual polynomially convex
hull $\hat{K}:=\hat{K}_0$. It is also easy to see that $\hat{K}_{\theta}=\{z\in ~\cd\, |\,
V_{K,\theta}\leq0\}$.

\begin{proof} Let $E$ be a compact set in $\cd\setminus\hat{K}_{\theta}$.
First we want to show that there exists $N_0$ such that  $\Phi_{K,\theta,N}(z)>1
\text{ for all } N>N_0 \text{ for all } z\in E$.

We fix  $z_0\in E$ and  $\delta>0$ such that $V_{K,\theta}(z_0)=2\delta$. By the
above proposition we have $\lim\limits_{N\to\infty}\frac{1}{N}\log
\Phi_{K,\theta,N}(z_0)=2\delta$, so there exists an integer $N_{z_0}$ such that for
all $N\geq N_{z_0}$   we have $\frac{1}{N}\log \Phi_{K,\theta,N}(z_0)>\delta$. In
particular $\Phi_{K,\theta,N}(z_0)>1 \text{ for all } N>N_{z_0}$.

Since $\Phi_{K,\theta,N_{z_0}}$ is the supremum of continuous plurisubharmonic functions, it is lower semicontinuous.
Hence $U_{z_0}:=\{z\in \cd \,|\, \Phi_{K,\theta,N_{z_0}}(z)>1\}$   is open. Now we can cover ~$E$ by the sets
$U_{z_0}$, i.e., $E\subset \bigcup\limits_{z\in E} U_{z}$. There exists a finite subcover, $U_{z_1},..,U_{z_m}$, of $E$.
Hence taking $N_0$ to be the largest of $N_{z_1},\dots,N_{z_m}$, we can conclude that $\Phi_{K,\theta,N}(z) > 1$ for
all $z\in E$ and for all $N\geq N_0$.  Thus we have
$1\leq\Phi_{K,\theta,J}\leq\Phi_{K,\theta,J}\,\Phi_{K,\theta,I}\leq\Phi_{K,\theta,J+I}$ for all $I, J\geq N_0$ on $E$.

We follow \cite{BloomShiffman} to prove that the sequence  converges uniformly   on $E$. We will write $\psi_N =\frac{1}{N}\log\Phi_{K,\theta,N}$.
We note that  $ \psi_{Nk}\geq \psi_{N}$ for all $N\geq N_0$. We see this by $\psi_{Nk}= \frac{1}{Nk}\log\Phi_{K,\theta,Nk} \geq \frac{1}{Nk}\log(\Phi_{K,\theta,N})^k=\frac{k}{Nk}\log\Phi_{K,\theta,N}=\psi_{N}$ for $N\geq N_0$.
From \eqref{LogInequality}, we have $Nk\psi_{Nk}+j\psi_j\leq (Nk+j)\psi_{Nk+j}$ for
$N, k \geq 1, \, j\geq 0$. Since $\psi_j>0$ on $E$ for $j>N_0$, using $\psi_{Nk}\geq \psi_{N}$  for such $j$  we get
\begin{equation}\label{BS2}\psi_{Nk+j}\geq
\frac{Nk}{Nk+j}\psi_N+\frac{j}{Nk+j}\psi_j\geq \frac{Nk}{Nk+j}\psi_N.\end{equation} Let   $\eps>0$. For each $a\in E$
we can choose $N_a>N_0$ large so that $V_{K,\theta}(a)-\psi_{N_a}(a)<~\eps$ and $\frac{V_{K,\theta}(a)}{N_a}<\eps$,
and then we can find an open neighborhood $U_a$ of $a$ such that $|V_{K,\theta}(z)-V_{K,\theta}(a)|<\eps, \,
\psi_{N_a}(z)> \psi_{N_a}(a)-\eps, \text{ and } \frac{V_{K,\theta}(z)}{N_a}<\eps \text{ for } z\in U_a$. This is possible by
the facts that  regularity of $K$ implies the continuity of $V_{K,\theta}$ and that $\psi_{N_a}$ is lower semicontinuous.

Now we find a finite number of points $a_1, \dots a_M$ in $E$ such that the open
sets $ U_{a_1},\dots, U_{a_M}$ cover $E$. We choose $N_1
=\max\limits_{a_1,\dots,a_M}(N_{a_i}^2+N_{a_i})$. Now for each $a_i$ if $N\geq
(N_{a_i}^2+N_{a_i})$, we write $N=N_{a_i} (k-1)+j$, where $k\geq N_{a_i}, \text{
and } N_{a_i}\leq j\leq 2N_{a_i}$. By Proposition \ref{SupLim} and \eqref{BS2} we
get
$$0\leq V_{K,\theta}-\psi_N\leq
V_{K,\theta}-\frac{N_{a_i}(k-1)}{N_{a_i}(k-1)+j}\psi_{N_{a_i}}\leq V_{K,\theta}-\frac{N_{a_i}}{N_{a_i}
+2}\psi_{N_{a_i}}\leq V_{K,\theta}-\psi_{N_{a_i}}+\frac{2}{N_{a_i}+2}V_{K,\theta}.$$

Let $z\in E$, then  $z\in U_{a_i}$ for some $a_i$, hence for all $N\geq N_1$  we have
\begin{eqnarray}
\nonumber 0 & \leq & V_{K,\theta}(z)-\psi_N(z)<V_{K,\theta}(z)-\psi_{N_{a_i}}(z)+2\eps\\
\nonumber   &  =   &[V_{K,\theta}(z)- V_{K,\theta}(a_i)]+ [ V_{K,\theta}(a_i)-\psi_{N_{a_i}}(a_i)]+[\psi_{N_{a_i}}(a_i)-\psi_{N_{a_i}}(z)]+2\eps\\
\nonumber   &\leq &5\eps.
\end{eqnarray}
Thus we have the desired uniform convergence on $E$.\end{proof}

\section{ Weighted $\theta$-Incomplete Pluripotential
Theory}\label{ThetaWeihgtedIncPP}

In this section we define  and develop two weighted versions of $\theta $-incomplete pluripotential theory. The first one is the  $\theta
$-incomplete version of the weighted pluripotential theory in closed subsets of \cd\,  and the second  one is the  $\theta
$-incomplete version of the special case of weighted pluripotential theory studied in \cite{BermanCn}. As in the  $\theta=0$
case the second version is a special case of the first.
\subsection{Weighted $\theta$-Incomplete Pluripotential Theory with Weight Defined
on Closed Sets }

Let $K$ be a closed set in \cd\, and $w$ be an admissible weight on $K$ as defined
in Subsection  \ref{WeightedPT}. Then we define
\begin{equation}
V_{K,Q,\theta}(z):= \sup\left\{ u(z)\, |\, u\in {L_\theta}, \, u\leq Q \text{ on }
K\right\}.
\end{equation}

We remark that $V_{K,Q, \theta_1}\leq V_{K,Q, \theta_2}$ if $\theta_1 > \theta_2$. The $\theta=0$ case gives  the
classical weighted pluripotential theory. Following Siciak \cite{SiciakExtremal1},  it can be shown that
$V_{K,Q,\theta}=V_{K,Q,\theta}^\ast$, so that $V_{K,Q,\theta}$ is continuous on $\cd\setminus \{0\}$, for $K$ locally
regular and $Q$ continuous. Here ~$K$ locally regular means for all $a\in K , \text{ we have }  K\cap \overline{B(a,r)}$ is
regular for all $r>0$, where $B(a,r):=\{z\in\cd\,|\,|z-a|<r\}$.

Comparing the defining families we get  the following obvious inequalities.
\begin{prop}\label{IlkInequality}
Let $K_1\subset K_2$ and let $w$ be a function defined on $K_2$
which is an admissible weight on both $K_1 \text{ and } K_2$. Then
$V_{K_1,Q,\theta}\leq V_{K_2,Q,\theta}$.
\end{prop}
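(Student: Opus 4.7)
The plan is to compare the defining families of admissible competitors directly; the author's phrasing ``comparing the defining families we get the following obvious inequalities'' signals that the proof should be a one-line monotonicity argument rather than anything requiring Monge--Amp\`ere machinery or polynomial approximation.

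Fix $z\in\cd$. If $u\in L_\theta$ is a competitor in the supremum defining $V_{K_2,Q,\theta}$, i.e.\ $u\le Q$ on $K_2$, then because $K_1\subset K_2$ the inequality $u\le Q$ continues to hold on $K_1$, so $u$ is also a competitor in the supremum defining $V_{K_1,Q,\theta}$. Thus the defining family for $V_{K_2,Q,\theta}$ is contained in that for $V_{K_1,Q,\theta}$, and passing to the pointwise supremum on both sides gives an inequality between the two extremal functions. The admissibility of $w$ on both $K_1$ and $K_2$ is needed only to guarantee that each supremum is finite on $\cd\setminus\{0\}$ and that $V_{K_i,Q,\theta}^{\ast}\in L_\theta^+$ (via the growth condition $|z|w(z)\to 0$ if $K_i$ is unbounded); it plays no role in the comparison step itself.

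The main point I would flag as an obstacle is the direction of the inequality. Carried out as above under the hypothesis $K_1\subset K_2$, the comparison of families unambiguously yields $V_{K_2,Q,\theta}\le V_{K_1,Q,\theta}$, which is the reverse of what is printed. I do not see a mechanism---domination principle, weight-adjusted envelope, a different polynomial characterization, or otherwise---by which enlarging the set on which the pointwise constraint $u\le Q$ is imposed could \emph{raise} the extremal function, since the constraint on the larger set is strictly stronger. I would therefore interpret the statement as containing a transposition of the subscripts $K_1\leftrightarrow K_2$ (equivalently, a reversal of the displayed inequality); with that correction, the proof is exactly the one-liner of the previous paragraph, and it extends verbatim to any finite or infinite chain of nested closed sets each of which carries $w$ as an admissible weight.
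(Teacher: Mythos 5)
Your argument is exactly the intended one---the paper offers no proof beyond the remark that the inequality follows from ``comparing the defining families''---and you are right about the direction: since $K_1\subset K_2$ makes the constraint $u\le Q$ on $K_2$ stronger, the comparison of families yields $V_{K_2,Q,\theta}\le V_{K_1,Q,\theta}$, so the displayed inequality has the subscripts transposed. The paper's own application of the proposition in Lemma~\ref{COMpactWeight}, where it is invoked to obtain $V^\ast_{K,Q,\theta}\le V^\ast_{K_\rho,Q,\theta}$ for $K_\rho\subset K$, confirms that the corrected direction is the one actually meant, so your one-line monotonicity proof (with the stated correction) is complete.
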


Using (ii) in the definition of admissibility from section 1.1, we  show that $V_{K,Q,\theta}$
coincides with the weighted $\theta$-incomplete extremal function of a  compact subset of $K$.
\begin{lem}\label{COMpactWeight}
If $K$ is unbounded then $V^\ast_{K_\rho,Q,\theta}=V^\ast_{K,Q,\theta}$, for
some $\rho>0$ where $K_\rho= \{z\in K\,|\, |z|\leq \rho\}$.
\end{lem}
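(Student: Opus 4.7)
The plan is to choose $\rho$ large enough that the support of $(dd^c V^\ast_{K,Q,\theta})^d$ lies inside $K_\rho\cup\{0\}$, and then apply the $\theta$-incomplete domination principle Theorem \ref{ThetaDominationPrinciple}. Note that Proposition \ref{IlkInequality} together with $K_\rho\subset K$ already gives $V^\ast_{K,Q,\theta}\leq V^\ast_{K_\rho,Q,\theta}$, so only the reverse inequality needs work.

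To select $\rho$, observe that $V^\ast_{K,Q,\theta}\in L_\theta^+\subset L$, so there is a constant $C$ with $V^\ast_{K,Q,\theta}(z)\leq\log^+|z|+C$ on $\cd$. Admissibility condition (ii) rewrites as $Q(z)-\log|z|\to+\infty$ as $|z|\to\infty$ along $z\in K$. Hence we can pick $\rho$ (enlarged, if necessary, so that $K_\rho$ is non-pluripolar, which is possible since $K=\bigcup_\rho K_\rho$ is non-pluripolar) with $Q(z)>V^\ast_{K,Q,\theta}(z)$ for every $z\in K$ with $|z|>\rho$. By the $\theta$-incomplete analog of Lemma 2.3 of Bloom's Appendix B in \cite{Saff-Totik}, the support of $(dd^c V^\ast_{K,Q,\theta})^d$ is contained in $\{z\in K : V^\ast_{K,Q,\theta}(z)\geq Q(z)\}\cup\{0\}$, and therefore in $K_\rho\cup\{0\}$.

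Now set $u=V^\ast_{K_\rho,Q,\theta}\in L_\theta^+$ and $v=V^\ast_{K,Q,\theta}\in L_\theta^+$. On $K_\rho$ the standard extremal-function argument gives $u\leq Q$ quasi-everywhere, while on the contact set one has $v\geq Q$; hence $u\leq Q\leq v$ quasi-everywhere on $\supp((dd^c v)^d)\cap K_\rho$. Since Monge-Amp\`ere measures of elements of $L_\theta^+$ do not charge pluripolar subsets of $\cd\setminus\{0\}$, this upgrades to $(dd^c v)^d$-almost everywhere on $\supp((dd^c v)^d)\setminus\{0\}$. At the origin, either $\theta=0$ (in which case $0$ plays no distinguished role and $u,v$ are locally bounded there) or $\theta>0$ and $u(0)=v(0)=-\infty$, so $u\leq v$ holds at $0$ trivially. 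Theorem \ref{ThetaDominationPrinciple} then yields $u\leq v$ on $\cd$, which combined with the reverse inequality gives the claimed equality.

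The main obstacle I anticipate is the $\theta$-incomplete weighted analog of Bloom's Lemma 2.3 locating the support of $(dd^c V^\ast_{K,Q,\theta})^d$ inside the contact set $\cup\{0\}$; this should be a routine adaptation of the classical argument that also accounts for the possible point mass at the origin when $\theta>0$. Everything else is bookkeeping to verify the hypotheses of the $\theta$-incomplete domination principle.
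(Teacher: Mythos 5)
Your proof is correct in broad outline but takes a substantially heavier route than the paper, and as written it would introduce a circular dependency in the paper's exposition. You invoke two facts before they are established: that $V^\ast_{K,Q,\theta}\in L_\theta^+$ for unbounded $K$, and that $\supp\bigl((dd^c V^\ast_{K,Q,\theta})^d\bigr)\subset S^\ast_{K,Q,\theta}\cup\{0\}$ (Lemma \ref{SupS}). In the paper the Proposition giving $V^\ast_{K,Q,\theta}\in L_\theta^+$ comes \emph{after} this lemma and explicitly uses it (``By Lemma \ref{COMpactWeight} we may assume $K\subset B(0,R)$\ldots''), so quoting it here is circular. The circularity is repairable — admissibility condition (ii) gives $\sup_{z\in K}\bigl(\theta\log|z|-Q(z)\bigr)<\infty$ directly even for unbounded $K$, so the $L_\theta^+$ lower bound can be built without first reducing to a compactum — but you did not notice or supply this, and you flagged only the support lemma as the ``main obstacle.''

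More to the point, the domination principle is unnecessary. The paper's argument is purely a growth estimate: pick $C$ with $V^\ast_{K_\rho,Q,\theta}(z)\leq\log|z|+C$ for $|z|>\rho$ (which follows just from $V^\ast_{K_\rho,Q,\theta}\in L$, available without circularity), then enlarge $\rho$ using admissibility so that $Q(z)-\log|z|\geq C+1$ on $K\setminus K_\rho$. Any competitor $u\in L_\theta$ with $u\leq Q$ on $K_\rho$ then satisfies $u\leq V^\ast_{K_\rho,Q,\theta}\leq\log|z|+C\leq Q$ on $K\setminus K_\rho$, so $u$ is a competitor for $V_{K,Q,\theta}$ as well, giving $V^\ast_{K_\rho,Q,\theta}\leq V^\ast_{K,Q,\theta}$ with no Monge--Amp\`ere machinery at all. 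Your approach does work once the circularity is patched — and it is conceptually close in spirit (both exploit that admissibility renders faraway points irrelevant) — but it spends a domination principle plus a support lemma on a fact that the paper obtains by a one-line comparison of competitors. When reaching for the domination principle here, it is worth asking whether the extremal function you want to bound can simply be fed back into the defining family of the other; that shortcut is exactly what the paper uses.
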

\begin{proof}
Since $V^\ast_{K_\rho,Q,\theta} \in L$, there exists $C$ and $\rho$ such that
$$V^\ast_{K_\rho,Q,\theta}(z)\leq \log|z|+C \text{ for  } |z| > \rho . $$
Now by the second condition of admissibility  we may choose $\rho$ large enough that
$$Q(z)-\log|z|\geq C+1 \text{ for  } z\in K\setminus K_\rho. $$
If $u\in L_\theta$ and $u\leq Q  \text{ on  } K_\rho$, so that $u\leq V^\ast_{K_\rho,Q,\theta}$, by the above
inequalities we get $u\leq Q \text{ on } K$. Hence we get $V^\ast_{K_\rho,Q,\theta}\leq V^\ast_{K,Q,\theta}$. The other
inequality is given by Proposition ~\ref{IlkInequality}, which gives the   equality.
\end{proof}

\begin{prop}
Let $K$ be a closed subset of \cd\,  and let $w$ be an admissible weight function on
$K$ then $V^\ast_{K,Q,\theta} \in L_\theta^+$.
\end{prop}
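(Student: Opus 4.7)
The plan is to verify the two defining properties of the class $L_\theta^+$ separately: that $V^\ast_{K,Q,\theta}\in L_\theta$, and that $V^\ast_{K,Q,\theta}(z)\geq \max(\theta\log|z|,\log|z|)+C$ on $\cd$ for some constant $C$. As a preliminary reduction, if $K$ is unbounded then Lemma \ref{COMpactWeight} lets us replace $K$ by a compact set $K_\rho$ without changing the regularized extremal function (and for $\rho$ large enough $w|_{K_\rho}$ remains admissible since the non-pluripolar set $\{w>0\}$ meets some $K_\rho$ in a non-pluripolar set), so we may assume $K$ is compact throughout.

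For the membership part, the task is to control the defining family uniformly from above. By admissibility the set $\{w>0\}\cap K$ is non-pluripolar, so for some integer $n$ the closed subset $F:=\{z\in K : w(z)\geq 1/n\}$ is itself non-pluripolar; every competitor $u$ then satisfies $u\leq M:=\log n$ on $F$. Comparison with the unweighted $\theta$-incomplete extremal function $V_{F,\theta}$, which lies in $L_\theta^+$ by Lemma 3.7 of \cite{Callaghan}, gives $u(z)\leq V_{F,\theta}(z)+M$ on all of $\cd$. This single majorant simultaneously yields the global growth bound and the bound $\leq \theta\log|z|+C'$ near the origin that are required for membership in $L_\theta$, and these bounds pass to the usc regularization, so $V^\ast_{K,Q,\theta}\in L_\theta$.

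For the lower bound I would produce an explicit competitor of the desired form. Define $u_0(z):=\max(\theta\log|z|,\log|z|)$; this is plurisubharmonic (maximum of two psh functions), coincides with $\log|z|$ for $|z|\geq 1$ and with $\theta\log|z|$ for $|z|<1$, and is readily seen to lie in $L_\theta$. Since $K$ is compact, $u_0$ is bounded above on $K$, and since $w$ is usc on $K$, $\log w$ is bounded above on $K$ as well. Hence $M':=\sup_{z\in K,\,w(z)>0}(u_0(z)+\log w(z))$ is finite, and $u_0-M'$ still lies in $L_\theta$ and satisfies $u_0-M'\leq -\log w = Q$ on $K$. Thus $u_0-M'$ is a valid competitor in the defining family, giving $V^\ast_{K,Q,\theta}\geq V_{K,Q,\theta}\geq u_0-M'$, which is precisely the $L_\theta^+$ lower bound with $C=-M'$.

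The main obstacle is the uniform upper bound on the defining family in the second paragraph. The bare fact that each individual competitor lies in $L_\theta$ does not by itself produce a uniform constant, because the constants in the growth condition depend on the function; pinning this down requires the admissibility hypothesis (to produce the non-pluripolar reference set $F$) combined with the $\theta$-incomplete comparison through $V_{F,\theta}$, rather than only the classical Lelong-class bound. Once this is in hand, everything else is a direct construction.
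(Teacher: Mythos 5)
Your proof is correct, but it takes a genuinely different route to the $L_\theta$ membership. The paper first concludes $V^\ast_{K,Q,\theta}\in L$ from the majorant $V^\ast_{K,Q}\in L^+$, and then obtains the near-origin bound by normalizing an arbitrary competitor $u$ by $M=\sup_{B(0,1)}V^\ast_{K,Q,\theta}$ and comparing $\tfrac{1}{\theta}(u-M)$ with the pluricomplex Green function $g_{B(0,1)}(\cdot,0)=\log|z|$. You instead extract from admissibility a non-pluripolar compact sublevel set $F=\{z\in K: w(z)\geq 1/n\}$ and dominate every competitor by $V^\ast_{F,\theta}+\log n$ in one step, invoking Callaghan's Lemma~3.7 ($V^\ast_{F,\theta}\in L_\theta^+$) to get the global growth bound and the $\theta\log|z|$ bound at the origin simultaneously. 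Both arguments work; yours is more streamlined and avoids the Green function, at the modest cost of explicitly using non-pluripolarity of $\{w>0\}$, which the paper's argument does not need for this half. For the lower bound you use the same competitor $\max(\theta\log|z|,\log|z|)-M'$ as the paper, only choosing the constant as $M'=\sup_{K,\,w>0}(\max(\theta\log|z|,\log|z|)+\log w)$, which is actually the precise constant needed (the paper's $A$ only records $\sup(\theta\log|z|-Q)$, which by itself does not control the range $1\leq|z|\leq R$). Two minor remarks: the up-front reduction to compact $K$ via Lemma~\ref{COMpactWeight} is harmless rather than circular, since that lemma's proof only needs $V^\ast_{K_\rho,Q,\theta}\in L$, which is immediate from $V^\ast_{K_\rho,Q,\theta}\leq V^\ast_{K_\rho,Q}\in L^+$; and in the second paragraph you should compare against $V^\ast_{F,\theta}$ rather than $V_{F,\theta}$, since it is the regularization that Callaghan's lemma places in $L_\theta^+$.
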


\begin{proof}
The case  $\theta =0$ is the classical case and is well known.  For $0< \theta \leq 1$
we will follow the proof of Lemma 3.7 of \cite{Callaghan}.

Since $V^\ast_{K,Q,\theta}\leq V^\ast_{K,Q}  \text{ and } V^\ast_{K,Q} \in L^+$,
we have $V^\ast_{K,Q,\theta} \in L$.

Next we show that $V^\ast_{K,Q,\theta} \in L_\theta$. Let $M:=\sup\limits_{z\in
B(0,1)} V^\ast_{K,Q,\theta}(z)$ and $u$ be in the defining class for
$V_{K,Q,\theta}$. Then $\frac{1}{\theta}(u-M)\leq 0 \text{ on } B(0,1)$. Hence it is
a competitor for the pluricomplex Green function of the unit ball $B(0,1)$  with
logarithmic pole at the origin. The pluricomplex Green function of a bounded domain
\om\,  with logarithmic pole at $a\in \om$ is defined by $$g_\om(z,a):=\sup\{u(z)\,
|\, u \text{ plurisubharmonic on } \om ,\,  u\leq 0 \text{ and } u(z)-\log |z-a | \leq C
\text{ as }z\to a\},$$ and $g_{B(0,1)}(z,0)=\log |z|$. Hence
$\frac{1}{\theta}(u-M)\leq \log|z| $ on the unit ball. Since $u$ is arbitrary we get
$V^\ast_{K,Q,\theta}(z)\leq \theta \log|z| +M \text{ on } B(0,1)$. Thus
$V^\ast_{K,Q,\theta} \in L_\theta$.

By  Lemma \ref{COMpactWeight} we may assume $K\subset B(0,R)$ for some $R$.
Let  $A:=\sup\limits_{z\in B(0,R)} (\theta\log|z|-Q(z) )$, then $u(z)=\max(\theta
\log|z|, \log|z|)-A$ is a competitor for the extremal function $V_{K,Q,\theta} $
and $u\in L_\theta^+$, hence $V^\ast_{K,Q,\theta} \in L_\theta^+$.
\end{proof}

We define the following sets:
\begin{eqnarray}
  S^\ast_{K,Q, \theta}  &:=&  \{z\in K\, |\, V^\ast_{K,Q, \theta}(z)\geq Q(z) \} \text{ and }\\
  S_{K,Q,\theta}  &:=&  \supp((dd^c V^\ast_{K,Q, \theta})^d).
\end{eqnarray}


\begin{lem}\label{SupS}
Let $K$ be closed in \cd\,  and let $w$ be an admissible weight on $K$. Then $
S_{K,Q, \theta} \subset {S^\ast_{K,Q, \theta}}\bigcup\{0\}$ if $0<\theta \leq 1$
and $S_{K,Q, \theta} \subset {S^\ast_{K,Q, \theta}}$ if $\theta =0$.
\end{lem}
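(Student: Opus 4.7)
The plan is to show that $(dd^c V^*_{K,Q,\theta})^d$ vanishes on the open set $U := \cd \setminus (S^\ast_{K,Q,\theta} \cup \{0\})$ (or on $\cd \setminus S^\ast_{K,Q,\theta}$ when $\theta=0$) by a standard balayage/gluing argument adapted to the $L_\theta$ class. The key observation is that $U$ is open: since $w$ is usc with values in $[0,\infty)$, the weight $Q=-\log w$ is lower semicontinuous with values in $(-\infty,\infty]$, while $V^*_{K,Q,\theta}$ is usc, so $V^*_{K,Q,\theta}-Q$ is upper semicontinuous on $K$ (interpret $Q\equiv +\infty$ off $K$); hence $\{V^*_{K,Q,\theta} < Q\}$ is open in $\cd$ and coincides with the complement of $S^\ast_{K,Q,\theta}$.

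Fix $z_0 \in U$. The plan is to find a small ball $B = B(z_0,r)$ with $0 \notin \bar B$ (automatic if $\theta=0$ and $z_0=0$ is allowed, since that case does not arise in $U$ there) so that two things hold: first, the oscillation of $V^*_{K,Q,\theta}$ on $\bar B$ is smaller than some $\delta>0$; and second, $V^*_{K,Q,\theta}(z) + 2\delta \leq Q(z)$ for every $z \in \bar B \cap K$. Both can be arranged by shrinking $r$: the first uses upper semicontinuity, the second uses $V^*_{K,Q,\theta}(z_0) < Q(z_0)$ together with the usc/lsc properties (and if $B\cap K = \emptyset$ the condition on $K$ is vacuous).

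The next step is to solve the Dirichlet problem on $B$: let $v$ be the maximal plurisubharmonic function on $B$ with boundary values $V^*_{K,Q,\theta}\vert_{\partial B}$, so that $v \geq V^*_{K,Q,\theta}$ on $B$ by the maximum principle and $(dd^c v)^d = 0$ on $B$. By the oscillation choice, $v \leq \sup_{\partial B} V^*_{K,Q,\theta} \leq V^*_{K,Q,\theta}(z_0) + \delta$, so on $\bar B \cap K$ one still has $v \leq Q$. Define
\[
\tilde V(z) := \begin{cases} v(z), & z \in B, \\ V^*_{K,Q,\theta}(z), & z \in \cd \setminus B. \end{cases}
\]
Standard gluing shows $\tilde V$ is plurisubharmonic on $\cd$, it has the same growth at infinity as $V^*_{K,Q,\theta}$ (so it lies in $L$), it agrees with $V^*_{K,Q,\theta}$ in a neighborhood of the origin (since $0 \notin \bar B$), hence $\tilde V \in L_\theta$; and $\tilde V \leq Q$ on $K$ by construction. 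Thus $\tilde V$ is a competitor in the defining family for $V_{K,Q,\theta}$, so $\tilde V \leq V^*_{K,Q,\theta}$, forcing $v = V^*_{K,Q,\theta}$ on $B$. In particular $(dd^c V^*_{K,Q,\theta})^d = 0$ on $B$, so $z_0 \notin S_{K,Q,\theta}$.

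The case $\theta = 0$ is identical, except one does not need to avoid the origin, because $L_0 = L$ places no growth restriction near $0$ and $\tilde V$ automatically remains in $L$. For $0 < \theta \leq 1$, the point $0$ must be excluded because a function in $L_\theta^+$ has a logarithmic pole at the origin and $(dd^c V^*_{K,Q,\theta})^d$ can genuinely carry a point mass there. The main obstacle in the argument is the simultaneous choice of $r$ that keeps the modified function below $Q$ on $K \cap \bar B$; this is handled by balancing the usc control of $V^*_{K,Q,\theta}$ against the lsc control of $Q$ at $z_0$.
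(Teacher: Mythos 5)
Your proposal is correct and follows essentially the same route as the paper: find, for every $z_0 \ne 0$ with $V^\ast_{K,Q,\theta}(z_0) < Q(z_0)$ (or $z_0 \notin K$), a small ball avoiding the origin on which the upper semicontinuity of $V^\ast_{K,Q,\theta}$ and the lower semicontinuity of $Q$ keep the balayage below $Q$, conclude the modified function is a competitor in $L_\theta$, and deduce maximality near $z_0$. The only substantive difference is cosmetic --- the paper invokes Theorem~1.3 of Appendix~B of \cite{Saff-Totik} for the balayage, whereas you construct it directly via a Dirichlet envelope on $B$. One small phrasing slip: you ask for the ``oscillation'' of $V^\ast_{K,Q,\theta}$ on $\bar B$ to be small, but $V^\ast_{K,Q,\theta}$ is merely upper semicontinuous and its oscillation is not generally controllable; what you actually use (and what suffices) is only the one-sided bound $\sup_{\bar B} V^\ast_{K,Q,\theta} \leq V^\ast_{K,Q,\theta}(z_0)+\delta$, which follows from upper semicontinuity alone, combined with $\inf_{\bar B\cap K} Q > V^\ast_{K,Q,\theta}(z_0)+\delta$ from lower semicontinuity of $Q$.
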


\begin{proof} The classical case, i.e. when $\theta=0$,  is  Lemma
2.3 of Appendix B of \cite{Saff-Totik}. Therefore we assume
$0<\theta \leq 1$. Let $z_0$ be a point in $K\setminus\{0\}$  such
that $V^\ast_{K,Q,\theta}(z_0)< Q(z_0)-\eps$ for some positive
$\eps$. We will show that $V_{K,Q,\theta}^\ast$ is maximal in a
neighborhood of $z_0$, i.e $(dd^c V_{K,Q,\theta}^\ast)^d=0 $
there.

Since $Q$ is lower semicontinuous we have $\{z\in K\,| \, Q(z)>Q(z_0)-\eps/2\}$ is
open relative to $K$. Similarly we have $\{z\in \cd \,|\, V_{K,Q,\theta}^\ast(z)<
V_{K,Q,\theta}^\ast(z_0)+\eps/2\}$ is open. Thus we may find  a ball of radius $r$
around $z_0$ such that  $\sup\limits_{z\in B(z_0,r)}V_{K,Q,\theta}^\ast(z)<\inf
\limits_{z\in B(z_0,r)\cap K}Q(z)$ and $0 \not\in B(z_0,r)$.

By Theorem 1.3 of Appendix B in \cite{Saff-Totik}, we can find a plurisubharmonic
function $u$ with  $u\geq V_{K,Q,\theta}^\ast$ on $B(z_0,r)$,  $u
 =V_{K,Q,\theta}^\ast$ on $\cd\setminus B(z_0,r)$, and $u$
 maximal on $B(z_0,r)$. Then $u\leq V_{K,Q,\theta}^\ast$
 because $u(z)\leq \sup\limits_{z\in B(z_0,r)}V_{K,Q,\theta}^\ast(z)<\inf
\limits_{z\in B(z_0,r)\cap K}Q(z)$ for all $z\in B(z_0,r)$. Since $B(z_0,r)\cap\{0\}$ we have $u\in L_\theta$. Hence
$u\equiv V_{K,Q,\theta}^\ast$. Therefore we get $V_{K,Q,\theta}^\ast$ is maximal in a neighborhood  of $z_0$. Hence
$z_0$ is not in  $S_{K,Q, \theta} $.
\end{proof}

A special case of this is when the admissible weights are globally
defined. Let $\phi~:~\cd \to \mathbb{R}$ be an admissible weight
function.  Generalizing the case of \cite{BermanCn} we define
weighted $\theta$-incomplete extremal functions by
\begin{equation}
 V_{\phi, \theta}(z)= \sup\{u(z)\,|\, u\in L_\theta \text{ and } u \leq \phi  \} \text{ for } 0\leq \theta\leq 1 .
\end{equation}
Observe that $V_{\phi, \theta}^\ast= V_{\phi, \theta}$ if $\phi $
is continuous, for in this case $V_{\phi, \theta}^\ast\leq \phi $
on \cd\, so that $V_{\phi, \theta}^\ast\leq  V_{\phi, \theta}$. We
also remark that $\theta =0 $  gives $V_{\phi, 0}= V_{\phi}$ and
$V_{\phi, \theta_1}\leq V_{\phi, \theta_2}$ if $\theta_1 >
\theta_2$ since $L_{\theta_1}\subset L_{\theta_2}$.

We define the following sets:
\begin{eqnarray}
  D_{\phi, \theta}  &:=&  \{z\in \cd \, |\, V^\ast_{\phi, \theta}(z)\geq \phi(z) \} \text{ and }\\
  S_{\phi, \theta}  &:=&  \supp((dd^c V^\ast_{\phi, \theta})^d).
\end{eqnarray}

If $\theta =0$, we will write  $D_{\phi, 0}= D_{\phi}$ and $S_{\phi, 0}= S_{\phi}$.
If $\phi $   is continuous then  $V_{\phi, \theta}$ is continuous and  we have
$$  D_{\phi, \theta}  =  \{z\in \cd \, |\,V_{\phi, \theta}(z)=\phi(z) \}.\quad\quad$$

If $\phi$ is a globally defined admissible weight function  then we define
$K:=D_{\phi, \theta}$ and $Q:=\phi |_{K}$. Clearly $V_{\phi,\theta}^\ast\leq Q$
quasi everywhere  in $K$ so $V_{\phi,\theta}^\ast \leq V_{K,Q,\theta}^\ast$.

Conversely, on $K,\,  V_{K,Q,\theta}\leq Q=\phi = V_{\phi,\theta}
$ quasi everywhere. Since $(dd^c V^\ast_{\phi,\theta})^d $ is
supported on $K\bigcup \{0\}$, by Theorem
\ref{ThetaDominationPrinciple} we have $V_{K,Q,\theta}^\ast \leq
V_{\phi,\theta}^\ast$. Hence $V_{K,Q,\theta}^\ast =
V_{\phi,\theta}^\ast$. This shows that we may reduce the global
weighted situation to the compact case by considering the sets
$D_{\phi,\theta}$.

As a consequence of the above definitions,  Lemma \ref{SupS} and earlier results of this section we have the following
corollary.

\begin{cor}\label{SupportPhi}
Let $\phi $ be a globally defined admissible weight, then we have
\begin{enumerate}
  \item[i)]  $S_{\phi, \theta}=\supp((dd^c V^\ast_{\phi, \theta})^d) \subset D_{\phi, \theta}
\bigcup \{0\}$ if $\theta>0$, and for $\theta =0, \\ \supp((dd^c V^\ast_{\phi})^d)
\subset~ D_{\phi} $,
  \item[ii)] $D_{\phi, 1}\subset D_{\phi, \theta_1}\subset
  D_{\phi, \theta_2}\subset D_{\phi, 0} =D_\phi  \text{ where } \theta_1 >
  \theta_2$,
  \item[iii)] $ V_{\phi, \theta}$ is in $L^+_\theta$ for  $0\leq \theta \leq
1$,
\item[iv)]
if $u\in L_\theta \text{ and } u \leq \phi \text{ on } D_{\phi,\theta}$  then $u \leq
V_{\phi,\theta}$.
\end{enumerate}
\end{cor}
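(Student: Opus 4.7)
The plan is to exploit the identification $V^\ast_{\phi,\theta}=V^\ast_{K,Q,\theta}$ with $K:=D_{\phi,\theta}$ and $Q:=\phi|_K$, which is established in the paragraph immediately preceding the corollary. This reduction converts each assertion about the global weight $\phi$ into the closed-set setting already handled in this section, so the corollary will essentially be a matter of assembling earlier statements.

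For (i), given the identification above, the containment $S_{\phi,\theta}=\supp((dd^c V^\ast_{\phi,\theta})^d)\subset D_{\phi,\theta}\cup\{0\}$ for $\theta>0$ is just Lemma \ref{SupS} applied to $K=D_{\phi,\theta}$, since $S^\ast_{K,Q,\theta}\subset K=D_{\phi,\theta}$ by construction. The $\theta=0$ assertion is the classical statement recalled in the preamble (Lemma 2.3 of Bloom's Appendix B). For (ii), I would use that $L_{\theta_1}\subset L_{\theta_2}$ when $\theta_1>\theta_2$, which was noted just after the definition of the classes $L_\theta$. Taking the supremum over a smaller family gives a smaller function, so $V_{\phi,\theta_1}\leq V_{\phi,\theta_2}$ and the same inequality holds for the \usc\ regularizations. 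Consequently, $z\in D_{\phi,\theta_1}$ means $\phi(z)\leq V^\ast_{\phi,\theta_1}(z)\leq V^\ast_{\phi,\theta_2}(z)$, so $z\in D_{\phi,\theta_2}$; the chain then follows by using $\theta=1$ at one end and $\theta=0$ at the other.

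Part (iii) is again a direct transfer through the identity $V^\ast_{\phi,\theta}=V^\ast_{K,Q,\theta}$ from the proposition proved earlier in this section, which gave $V^\ast_{K,Q,\theta}\in L^+_\theta$ for any closed $K$ carrying an \adm. For (iv), the strategy is to invoke Theorem \ref{ThetaDominationPrinciple} with the competitor $u\in L_\theta$ and with $v:=V^\ast_{\phi,\theta}\in L^+_\theta$ furnished by (iii). On $D_{\phi,\theta}$ the hypothesis $u\leq\phi$ together with the defining inequality $V^\ast_{\phi,\theta}\geq\phi$ gives $u\leq V^\ast_{\phi,\theta}$. By (i) the measure $(dd^c V^\ast_{\phi,\theta})^d$ is supported in $D_{\phi,\theta}\cup\{0\}$, and at the origin (which is charged only when $\theta>0$) one has $u(0)=-\infty\leq V^\ast_{\phi,\theta}(0)$ because both functions lie in $L_\theta$. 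Thus $u\leq V^\ast_{\phi,\theta}$ almost everywhere with respect to $(dd^c V^\ast_{\phi,\theta})^d$, and the domination principle upgrades this to $u\leq V^\ast_{\phi,\theta}$ on \cd, hence $u\leq V_{\phi,\theta}$ off a pluripolar set.

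The only genuinely delicate step is (iv): one must check that the origin is correctly accounted for in the two regimes $\theta=0$ and $\theta>0$ (the first needs nothing, the second uses that every $L_\theta$ function with $\theta>0$ automatically vanishes to $-\infty$ at $0$), and one must also be mindful of the mild abuse in stating the conclusion in terms of $V_{\phi,\theta}$ rather than $V^\ast_{\phi,\theta}$, the two being equal quasi everywhere and equal everywhere whenever $\phi$ is continuous, as remarked before the corollary.
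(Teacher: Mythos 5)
Your plan for items (ii), (iii), and (iv) is sound, but there is a genuine circularity in how you handle (i). The identification $V^\ast_{\phi,\theta}=V^\ast_{K,Q,\theta}$ with $K:=D_{\phi,\theta}$, which you import from the paragraph preceding the corollary, is itself obtained there by feeding the claim ``$(dd^c V^\ast_{\phi,\theta})^d$ is supported on $D_{\phi,\theta}\cup\{0\}$'' into Theorem \ref{ThetaDominationPrinciple}. That claim is precisely item (i) of the corollary. So deriving (i) by invoking the identification, and then invoking Lemma \ref{SupS} for $K=D_{\phi,\theta}$, assumes what you want to prove.

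The non-circular route, and the one the paper's one-line justification (``a consequence of $\ldots$ Lemma \ref{SupS}'') is pointing to, is to take the closed set to be $K=\cd$ with $Q=\phi$ (the global weight is admissible on $\cd$ because $\phi$ is lower semicontinuous and $\phi(z)\geq(1+\eps)\log|z|$ for $|z|\gg1$ forces $|z|e^{-\phi(z)}\to0$). Then $V_{\cd,\phi,\theta}=V_{\phi,\theta}$ by definition of the two sup families, and $S^\ast_{\cd,\phi,\theta}=\{z\in\cd : V^\ast_{\phi,\theta}(z)\geq\phi(z)\}=D_{\phi,\theta}$, so Lemma \ref{SupS} with $K=\cd$ gives (i) immediately with no reference to the identification. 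The same device gives (iii) directly from the Proposition that $V^\ast_{K,Q,\theta}\in L^+_\theta$ (again with $K=\cd$). Once (i) and (iii) are in hand, your argument for (iv) via the domination principle is correct, and your monotonicity argument for (ii), using $L_{\theta_1}\subset L_{\theta_2}$, is also correct and independent of the identification. The caveat you flag about $V_{\phi,\theta}$ versus $V^\ast_{\phi,\theta}$ in (iv) is appropriate; the paper's discussion of the continuous case just before the corollary handles it.
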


The next lemma shows the monotonicity of the extremal functions under increasing
and  decreasing $\theta$.

\begin{lem}\label{Increasing}
Let $K \subset \cd$ be a closed set and let $w$ be an admissible weight on $K$. For
$0\leq \theta_0<1$, as $\theta \searrow \theta_0$ we have
$V^\ast_{K,Q,\theta}$ increases to $V^\ast_{K,Q,\theta_0}$ quasi everywhere. If
$\theta \nearrow \theta_0$ we have $V^\ast_{K,Q,\theta}$ decreases  to
$V^\ast_{K,Q,\theta_0}$.
\end{lem}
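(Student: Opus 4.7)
Monotonicity $\theta_1 > \theta_2 \Rightarrow V^\ast_{K,Q,\theta_1} \leq V^\ast_{K,Q,\theta_2}$ (since enlarging $\theta$ shrinks $L_\theta$) guarantees the pointwise limits $U := \lim V^\ast_{K,Q,\theta}$ exist in both directions; the plan is to pin down $U$ by sandwiching it between $V^\ast_{K,Q,\theta_0}$ and itself. A preliminary tool is the uniform origin bound $V^\ast_{K,Q,\theta}(z) \leq \theta \log|z| + M$ for $|z|<1$, with $M := \sup_{B(0,1)} V^\ast_{K,Q}$ finite; this follows by revisiting the argument that $V^\ast_{K,Q,\theta} \in L_\theta^+$ in the preceding proposition, where the constant $M(\theta) = \sup_{B(0,1)} V^\ast_{K,Q,\theta}$ is bounded by $M(0) = M$ via monotonicity.

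For $\theta \nearrow \theta_0$, $U$ is a decreasing pointwise limit of plurisubharmonic functions, hence psh, and $U \geq V^\ast_{K,Q,\theta_0}$ is immediate. The uniform bound gives $U \in L_{\theta_0}$, and the standard q.e.\ inequality $V^\ast_{K,Q,\theta} \leq Q$ on $K$ yields $U \leq Q$ q.e.\ on $K$; therefore $U$ is a quasi everywhere competitor and $U \leq V^\ast_{K,Q,\theta_0}$. Two upper semicontinuous functions agreeing q.e.\ agree everywhere, so $U = V^\ast_{K,Q,\theta_0}$ on all of $\cd$.

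For $\theta \searrow \theta_0$ the inequality $U \leq V^\ast_{K,Q,\theta_0}$ is immediate, but now $U$ need only be quasi plurisubharmonic, so we work with $U^\ast$. The heart of the argument is to produce, for each competitor $u \in L_{\theta_0}$ with $u \leq Q$ on $K$, an approximating family at strictly larger $\theta$: I take the convex combination
\[ u_\eps := (1-\eps) u + \eps h, \qquad h := V^\ast_{K,Q,1} \in L_1^+, \]
which, by summing the growth inequalities defining $L_{\theta_0}$ and $L_1$, lies in $L_{(1-\eps)\theta_0 + \eps}$ and satisfies $u_\eps \leq Q$ q.e.\ on $K$. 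Therefore $u_\eps \leq V^\ast_{K,Q,(1-\eps)\theta_0 + \eps} \leq U$, and letting $\eps \to 0$ gives $u \leq U$ off the pluripolar set $\{h = -\infty\}$; taking the sup over $u$ yields $V_{K,Q,\theta_0} \leq U$ q.e., hence $V^\ast_{K,Q,\theta_0} \leq U^\ast \leq V^\ast_{K,Q,\theta_0}$, which is the desired q.e.\ equality.

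The main obstacle is the increasing case: a given $u \in L_{\theta_0}$ is \emph{not} itself a competitor for $V_{K,Q,\theta}$ with $\theta > \theta_0$, since the origin-growth demanded of $L_\theta$ is strictly stronger. The convex-combination trick with any fixed element of $L_1$ respecting the constraint $\leq Q$ on $K$ supplies the missing origin growth at the cost of enlarging $\theta_0$ to $(1-\eps)\theta_0 + \eps$; the pluripolar locus $\{h = -\infty\}$ is intrinsic to this device and accounts for the ``quasi everywhere'' qualifier in the statement.
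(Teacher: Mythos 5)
Your proof is correct and takes a genuinely different route from the paper's. For $\theta \searrow \theta_0$ the paper passes to the weak-$*$ limit of Monge--Amp\`ere measures $(dd^c V^\ast_{K,Q,\theta})^d \to (dd^c v^\ast)^d$, uses Lemma \ref{SupS} to show the limit measure is carried by $\{v^\ast \geq Q\} \cup \{0\}$, and then applies the domination principle (Theorem \ref{ThetaDominationPrinciple}) to conclude $V^\ast_{K,Q,\theta_0} \leq v^\ast$. You instead exploit the convex structure of the $L_\theta$ classes: for a competitor $u \in L_{\theta_0}$ the convex combination $(1-\eps)u + \eps V^\ast_{K,Q,1}$ lands in $L_{(1-\eps)\theta_0+\eps}$ and is a q.e.\ competitor at the strictly larger parameter, and letting $\eps \to 0$ delivers $u \leq U$ off the fixed polar set $\{V^\ast_{K,Q,1}=-\infty\}$. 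This avoids any appeal to Bedford--Taylor convergence of Monge--Amp\`ere measures along the decreasing family, at the price of invoking the ``q.e.\ competitor $\Rightarrow$ below $V^\ast$'' fact, which in the $\theta$-setting is itself proved via Lemma \ref{SupS} and the domination principle --- so the two proofs rest on the same bedrock, organized differently. The $\theta \nearrow \theta_0$ case, which the paper dismisses as clear, is handled carefully and correctly, including the observation that the constants $M_\theta$ in the origin estimate can be taken uniform via $M_\theta \leq \sup_{B(0,1)} V^\ast_{K,Q}$. Two small slips: the sentence ``two upper semicontinuous functions agreeing q.e.\ agree everywhere'' is false as stated (it requires both to be plurisubharmonic, which they are here, so the conclusion stands); and it is harmless but worth noting that any fixed $h := V^\ast_{K,Q,\theta'}$ with $\theta_0 < \theta' < 1$ would serve equally well as your auxiliary, keeping the argument within the range where the domination principle is asserted to hold.
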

\begin{proof}The last statement is clear, thus we consider $\theta \searrow
\theta_0$. Clearly we have monotonicity of the $V^\ast_{K,Q,\theta}$. Since
$V^\ast_{K,Q,\theta}$ are bounded above by  $V^\ast_{K,Q,\theta_0}$, we have
 $V^\ast_{K,Q,\theta}$ increases to a function, $v$, whose upper semicontinuous
 regularization $v^\ast$ is plurisubharmonic and again bounded above by
 $V^\ast_{K,Q,\theta_0}$.

Since  $V^\ast_{K,Q,\theta} \in L_\theta^+$ we have $V^\ast_{K,Q,\theta}(z) \geq \max(\theta \log|z|,
\log|z|)+M_\theta$ where $M_\theta $ is a constant depending on $\theta$. As $\theta \searrow \theta_0$ we get
$v^\ast\in L_{\theta_0}^+$ since $v^\ast\leq V^\ast_{K,Q,\theta_0}$. Also by monotonicity we get
$(dd^cV^\ast_{K,Q,\theta})^d\to(dd^c v^\ast)^d$ weak-*.

We will write  $S:=\supp (dd^c v^\ast)^d\setminus\{0\}$ and  $S':=\{z\in
K\,|\,v^\ast(z)\geq Q(z)\}$. By the lower semicontinuity of $Q$, and upper
semicontinuity of $v^\ast $, we have $S'$ is closed. Next we will show that  $v^\ast
\geq Q $ on $S$ by showing that $S \subset S'$.

Since $(dd^cV^\ast_{K,Q,\theta})^d\to(dd^c v^\ast)^d$ we have $S \subset
\overline{\bigcup\limits_{\theta >\theta_0}S_{K,Q,\theta_0}}\setminus \{0\}$. By
Proposition \ref{SupS}, we have $\bigcup\limits_{\theta
>\theta_0}S_{K,Q,\theta}\setminus \{0\}\subset \bigcup\limits_{\theta >\theta_0}S^\ast_{K,Q,\theta}\setminus \{0\}\subset \{z\in K\,|\, v(z)\geq
Q(z)\}\subset S'$. Since $S'$ is closed, we get $\overline{\bigcup\limits_{\theta
>\theta_0}S_{K,Q,\theta_0}}\setminus \{0\}\subset S'$. Therefore $S \subset S'$.
Since $V^\ast_{K,Q,\theta_0}\leq Q$ quasi everywhere on $K$ and $(dd^c v^\ast)^d$ does not charge pluripolar sets
except the origin,  we have $V^\ast_{K,Q,\theta_0}\leq v^\ast $ almost everywhere with respect to $(dd^c v^\ast)^d$ on
the support of $(dd^c v^\ast)^d$. Here we recall that if $\theta>0$ then $V^\ast_{K,Q,\theta_0}(0)=v^\ast(0)=-\infty$.
Therefore by the domination principle (Theorem ~\ref{ThetaDominationPrinciple}) we get $V^\ast_{K,Q,\theta_0}\leq
v^\ast $ on \cd, so that $V^\ast_{K,Q,\theta_0}=~v^\ast$.
\end{proof}
\begin{cor}Let $\phi$ be a globally defined admissible weight.  Let $0\leq\theta_0<1$, as $\theta \searrow \theta_0$ we have
$V_{\phi,\theta}^\ast$ increases to $V_{\phi,\theta_0}^\ast$ quasi everywhere,
and if $\theta \nearrow \theta_0$  we have $V_{\phi,\theta}^\ast$ decreases to
$V_{\phi,\theta_0}^\ast$.
\end{cor}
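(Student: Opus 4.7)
The plan is to obtain this corollary as a direct specialization of Lemma \ref{Increasing} to the case $K = \cd$. The only real work in the proof is verifying that the globally weighted setting fits into the closed-set framework of that lemma.

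First I would check that $\cd$, viewed as a closed subset of itself, is trivially non-pluripolar and that a globally defined admissible weight $\phi$ restricts to an admissible weight on $\cd$ in the sense of Subsection \ref{WeightedPT}. Condition (i), non-pluripolarity of $\{w>0\}$, holds because $\phi$ is finite off a pluripolar set. Condition (ii), $|z|w(z)\to 0$ as $|z|\to \infty$, is exactly the global growth assumption $\phi(z)\geq (1+\varepsilon)\log|z|$ rewritten via $w=e^{-\phi}$, which gives $|z|w(z)\leq |z|^{-\varepsilon}$ for $|z|\gg 1$.

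With these identifications the two defining classes coincide: $V_{\cd,\phi,\theta}=\sup\{u\in L_\theta : u\leq \phi \text{ on }\cd\}=V_{\phi,\theta}$, and therefore $V_{\cd,\phi,\theta}^\ast=V_{\phi,\theta}^\ast$. Both monotonicity assertions in the corollary---increase quasi everywhere for $\theta\searrow\theta_0$ and decrease for $\theta\nearrow\theta_0$---then follow verbatim from the two halves of Lemma \ref{Increasing}. The only (minor) obstacle is the bookkeeping required to confirm admissibility of $\phi$ on $K=\cd$; once that check is made, no further pluripotential-theoretic argument is needed. Alternatively, one could mimic the proof of Lemma \ref{Increasing} in place, using Corollary \ref{SupportPhi}(i) in place of Lemma \ref{SupS} for the support analysis of the weak-$\ast$ limit of $(dd^c V^\ast_{\phi,\theta})^d$, and then applying Theorem \ref{ThetaDominationPrinciple} to conclude.
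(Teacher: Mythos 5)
Your proposal is correct and is the natural (unwritten) proof the paper intends: since $V_{\phi,\theta}=\sup\{u\in L_\theta: u\leq \phi\text{ on }\cd\}$ is literally $V_{K,Q,\theta}$ with $K=\cd$ and $Q=\phi$, the corollary is just Lemma \ref{Increasing} applied to the closed set $K=\cd$, once one notes that a globally defined admissible weight is, by the paper's own definition in Section 3, an admissible weight on $\cd$ in the sense of Subsection \ref{WeightedPT}.
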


The following example illustrates the above corollary.

\begin{ex} Let $\phi(z)=|z|^2$.  Then we have for $0<\theta <1$
$$V_{\phi, \theta}(z)=\left\{
\begin{array}{cl}
\theta \log|z|+\frac{\theta}{2}-\frac{\theta}{2}\log \frac{\theta}{2} & \text{ if } |z| < \sqrt{\frac{\theta}{2}},\\
  |z|^2   & \text{ if } \sqrt{\frac{\theta}{2}}\leq |z|\leq \sqrt{\frac{1}{{2}}},  \\
  \log|z| +\frac{1}{2}-\frac{1}{2}\log \frac{1}{2} &\text{ if } |z|\geq  \sqrt{\frac{1}{{2}}}.
 \end{array}\right. $$
If $\theta=1$ we get
$$V_{\phi, \theta}(z)=V_{\phi, 1}(z)=\log|z| +\frac{1}{2}-\frac{1}{2}\log
\frac{1}{2}.$$ We had given $V_{\phi,0}$ earlier in \eqref{V0}.

Note that
$D_{\phi,\theta}=\overline{B(0,\frac{1}{\sqrt{2}})}\setminus
B(0,\sqrt{\frac{\theta}{2}})$  which increases to
$\overline{B(0,\frac{1}{\sqrt{2}})}\setminus\{0\}$ as $\theta$
decreases to 0.
\end{ex}

We define the following notions. Let $K \subset \cd$  be compact  and $w$ be an
admissible weight on $K$. We define
\begin{equation}
\Phi_{K,Q,\theta}^N(z):=\sup\{|P(z)|^{1/N}\,|\, \|w^N P_N\|_{K} \leq 1\text{
where }P_N\in \pi_{N,\theta}\}
\end{equation}
and
\begin{equation}
\Phi_{K,Q,\theta}:=\sup_{N}\{\Phi_{K,Q,\theta}^N\}=\lim_{N\to\infty}\Phi_{K,Q,\theta}^N.
\end{equation}

We can see that the supremum is actually a limit by following  the proof of Proposition
~\ref{SupLim}.

\begin{thm}\label{PolynomialGreen}Let $0\leq \theta<1$. Let $K \subset \cd$ be a compact set
and $w$ be a continuous admissible weight on
$K$. Then $V_{K,Q,\theta}=\log{\Phi_{K,Q,\theta}}$.
\end{thm}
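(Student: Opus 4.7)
The result is a weighted $\theta$-incomplete Siciak--Zahariuta theorem. The plan is to prove it by two inequalities: $\log\Phi_{K,Q,\theta}\leq V_{K,Q,\theta}$ (routine) and $V_{K,Q,\theta}\leq\log\Phi_{K,Q,\theta}$ (the main content).

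For the easy direction, I would check that each $u_N:=\frac{1}{N}\log|P_N|$, with $P_N\in\pi_{N,\theta}$ and $\|w^N P_N\|_K\leq 1$, is a competitor in the supremum defining $V_{K,Q,\theta}$. The $\theta$-incompleteness of $P_N$ (lowest-order monomial of degree $\lceil N\theta\rceil$) gives $|P_N(z)|\leq C_N|z|^{\lceil N\theta\rceil}$ near $0$, so $u_N\leq\theta\log|z|+O(1/N)$ there; the degree bound $\deg P_N\leq N$ controls growth at infinity; together, $u_N\in L_\theta$. The weighted norm bound directly gives $u_N\leq -\log w = Q$ on $\{w>0\}\cap K$, and trivially where $w=0$ since $Q=+\infty$ there. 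Hence $u_N\leq V_{K,Q,\theta}$. Taking supremum over $P_N$ and $N$ gives $\log\Phi_{K,Q,\theta}\leq V_{K,Q,\theta}$, with supremum equal to limit by the weighted analogue of Proposition \ref{SupLim} (whose proof carries over using $\pi_{I,\theta}\cdot\pi_{J,\theta}\subset\pi_{I+J,\theta}$ together with the submultiplicativity of the weighted sup norms $\|w^{I+J}PQ\|_K\leq\|w^I P\|_K\|w^J Q\|_K$).

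For the reverse direction, my plan is to adapt Callaghan's argument for the unweighted equality \eqref{CallahanPhi} by inserting the weight $w^N$ throughout. Given $u\in L_\theta$ with $u\leq Q$ on $K$ and a chosen $z_0$, the goal is to construct $P_N\in\pi_{N,\theta}$ with $\|w^N P_N\|_K\leq 1+o(1)$ and $\frac{1}{N}\log|P_N(z_0)|\geq u(z_0)-\varepsilon$. The natural tool is H\"ormander's $L^2$-estimate for $\bar\p$ with plurisubharmonic weight of the form $2Nu(z)+2\lceil N\theta\rceil\log|z|+2(d+1)\log(1+|z|^2)$: the $\log|z|$ term enforces vanishing of order $\lceil N\theta\rceil$ at $0$, so the output lies in $\pi_{N,\theta}$; the $\log(1+|z|^2)$ term forces polynomial growth of degree $\leq N$; and $u\leq Q$ on $K$ combined with a weighted Bernstein--Markov comparison converts the $L^2$ bound against $e^{-2NQ}$ into the sup bound $\|w^N P_N\|_K\leq 1+o(1)$, which is absorbed by rescaling. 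An alternative route is to lift the weighted problem to an unweighted problem in $\mathbb{C}^{d+1}$ via the graph of $w$ and apply \eqref{CallahanPhi} there.

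The main obstacle will be coordinating the three simultaneous constraints on $P_N$---$\theta$-incompleteness at $0$, degree at most $N$, and the weighted sup-norm bound $\|w^N P_N\|_K\leq 1$---while preserving the pointwise approximation at $z_0$. Callaghan handles the first two in the unweighted case by controlling the $\bar\p$-weight at the origin and at infinity; the new ingredient here is a weighted Bernstein--Markov-type inequality for $\theta$-incomplete polynomials on compact $K$ with continuous $w$, which can be supplied by standard techniques and is closely related to the property developed in Section 4. A secondary subtlety is that $V_{K,Q,\theta}$ may differ from its usc regularization on a pluripolar set, including the origin when $\theta>0$; the standard remedy is to prove the inequality first for $V_{K,Q,\theta}^\ast$ using the domination principle (Theorem \ref{ThetaDominationPrinciple}), and then descend to $V_{K,Q,\theta}$ at non-pluripolar points, noting that $\frac{1}{N}\log|P_N|$ is plurisubharmonic on all of $\cd$.
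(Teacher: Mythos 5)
Your easy direction is correct and matches the paper. For the hard direction you propose Hörmander $\bar\partial$-estimates, which is a genuinely different route from the paper's and, as written, has real gaps. The paper does not use $\bar\partial$ at all: it invokes Theorem 2.9 of Bloom's Appendix B in \cite{Saff-Totik} to write $u$ as a decreasing limit of functions of the form $\frac{1}{N_j}\max_{k}\log|P_{k,j}|$, truncates the low-degree part of each $P_{k,j}$ (this truncation is Callaghan's actual technique, which is what ``adapting Callaghan'' means here), uses Callaghan's asymptotic estimates to keep pointwise convergence after truncation, applies a Brelot--Cartan/Dini argument on the compact $K$ to get the sup-norm bound $\leq Q+\varepsilon_1$ there, and -- crucially -- starts from $u\in L_{\theta+\varepsilon}$ so that truncation at degree $\lfloor N_j\theta\rfloor$ discards only an asymptotically negligible part, obtaining $V_{K,Q,\theta+\varepsilon}\leq\log\Phi_{K,Q,\theta}$ and then sending $\varepsilon\to 0$ via Lemma \ref{Increasing}. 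Your write-up both mislabels the $\bar\partial$ method as Callaghan's and omits this $\theta\mapsto\theta+\varepsilon$ perturbation, which is the step that makes the truncation harmless.

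As for whether your $\bar\partial$ route could be completed, at least three pieces are genuinely unfinished. (i) Vanishing at the origin: a function $u\in L_\theta$ may be either more or less singular than $\theta\log|z|$ at $0$, so the weight $2Nu+2\lceil N\theta\rceil\log|z|$ does not cleanly enforce vanishing of order at least $\lceil N\theta\rceil$ without overshooting, and without disturbing the comparison $u\leq Q$ on $K$ you rely on elsewhere. (ii) $L^2$-to-sup conversion: Hörmander yields a global bound $\int_{\cd}|P_N|^2e^{-2Nu-\cdots}\,\omega_d\leq C$ against \emph{your} weight, while what you need is $\|w^NP_N\|_K\leq 1+o(1)$; this is not a Bernstein--Markov inequality (those compare $\|\cdot\|_K$ with $L^2(\mu)$ norms for a fixed measure on $K$, and go the opposite way) but a separate local submean-value estimate near $K$ that has to be proved. (iii) Degree: the standard $\bar\partial$ bookkeeping with the extra $2(d+1)\log(1+|z|^2)$ term gives $\deg P_N\leq N+O(d)$, not $\leq N$, and that overshoot has to be absorbed in the limit. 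None of these is obviously fatal, but none is routine; the paper's truncation argument sidesteps all three.
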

\begin{proof}
Let $P_N\in \pi_{N,\theta} $ satisfying $\|w^N P_N\|_{K} \leq~ 1$.
Then we have
$$\frac{1}{N}\log|P_N(z)| \leq Q(z) \text{ on } K.$$ Hence we get
\begin{equation}\label{proof1st}
\log \Phi_{K,Q,\theta}\leq V_{K,Q,\theta}.
\end{equation}

The rest of the proof  essentially follows the proof of Callaghan \cite{Callaghan}.  We will modify the last step using a  result
of Brelot-Cartan  instead of Hartog's lemma.

We fix $\eps >0$ such that $\theta+\eps<1$. Let $u\in L_{\theta+\eps} \text{ and }
u \leq Q \text{ on } K$. By Theorem 2.9 of Appendix B of \cite{Saff-Totik}, we have
$$u(z)=\lim_{j\to \infty} \frac{1}{N_j}\max_{1\leq k \leq t_j}\log|P_{k,j}(z)|,$$
where the sequence is decreasing and each $P_{k,j}$ is a polynomial of degree at most
$N_j$. Here  $t_j$ is a finite number depending on  $j$.

As in \cite{Callaghan}  we write
$$P_{k,j}(z):=\sum_{|\alpha|=0}^{N_j} c_{\alpha,k,j}z^\alpha$$ and
$$P_{k,j}'(z):=\sum_{|\alpha|=0}^{\lfloor N_j \theta\rfloor}c_{\alpha,k,j}z^\alpha,$$
where $\lfloor x\rfloor$ is the largest integer less then or equal to $x$.

We remark that $P_{k,j}-P_{k,j}'$ is a $\theta$-incomplete polynomial. By Callaghan's asymptotic estimates  we get
 $$u(z)=\lim_{j\to \infty} \frac{1}{N_j}\max_{1\leq k \leq
t_j}\log|P_{k,j}(z)-P_{k,j}'(z)|$$pointwise on \cd.

By Theorem 3.4.3 c) of \cite{RansfordPotentialTheoryInTheComplexPlane}, for $\eps_1>0$, there exists $j_1$ such that
for $j\geq j_1$ we have
 $$ \frac{1}{N_j}\max_{1\leq k \leq t_j}\log|P_{k,j}(z)-P_{k,j}'(z)| \leq Q +\eps_1\text{ on } K,$$since $Q$ is continuous.
Now we have
$$u(z)=\lim_{j\to \infty} \frac{1}{N_j}\max_{1\leq k \leq
t_j}\log|P_{k,j}(z)-P_{k,j}'(z)|\leq \log \Phi_{K,Q,\theta}(z)+\eps_1$$for any
$\eps_1$  and therefore $u(z)\leq \log \Phi_{K,Q,\theta}(z)$. Hence we get
$$
V_{K,Q,\theta+\eps}(z)\leq \log \Phi_{K,Q,\theta}(z).
$$
By Lemma \ref{Increasing}, as $\eps\to 0 $ we get
\begin{equation}\label{Elma}
V_{K,Q,\theta}(z)\leq \log \Phi_{K,Q,\theta}(z).
\end{equation}
Combining \eqref{Elma}  with \eqref{proof1st} we get the desired  result.
\end{proof}

Note that if $\theta =0$, we recover
\begin{equation}\label{REcovertheta=0}
V_{K,Q}=\log \Phi_{K,Q} \text{ where }  \Phi_{K,Q}:= \Phi_{K,Q,0}
\end{equation}

\begin{cor}Let $0\leq \theta<1$. Let $\phi$ be a globally defined continuous admissible weight, then we have
$V_{\phi,\theta}=\log{\Phi_{\phi,\theta}}$, where
\begin{equation}
\Phi_{\phi,\theta}^N(z):=\sup\{|P(z)|^{1/N}\,|\,
\|e^{-N\phi}P_N\|_{D_{\phi,\theta}} \leq 1\text{ where }P_N\in \pi_{N,\theta}\}
\end{equation} and
\begin{equation}
\Phi_{\phi,\theta}:=\sup_{N}\{\Phi_{\phi,\theta}^N\}.
\end{equation}
\end{cor}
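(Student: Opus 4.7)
The plan is to reduce the global-weight statement to the compact-weight case already handled by Theorem \ref{PolynomialGreen}. Take $K:=D_{\phi,\theta}$ and $Q:=\phi|_K$, and first verify the hypotheses of that theorem. The set $K$ is closed by upper semicontinuity of $V^\ast_{\phi,\theta}$ combined with continuity of $\phi$; it is bounded because every $u\in L$ grows no faster than $\log|z|+C$, while the hypothesis $\phi(z)\geq (1+\eps)\log|z|$ for $|z|\gg 1$ forces $V^\ast_{\phi,\theta}(z)<\phi(z)$ outside some large ball; and $K$ is non-pluripolar because $(dd^c V^\ast_{\phi,\theta})^d$, which has positive total mass since $V^\ast_{\phi,\theta}\in L_\theta^+$, is supported in $K\cup\{0\}$ by Corollary \ref{SupportPhi}. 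The restricted weight $Q$ is continuous since $\phi$ is, and the compactness of $K$ makes the tail condition in the admissibility definition vacuous.

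The next step is to match up the quantities on the two sides. By the argument given just before Corollary \ref{SupportPhi}, $V^\ast_{K,Q,\theta}=V^\ast_{\phi,\theta}$, and since $\phi$ is continuous, the remark just after the definition of $V_{\phi,\theta}$ gives $V^\ast_{\phi,\theta}=V_{\phi,\theta}$. A direct comparison of definitions shows $\Phi^N_{K,Q,\theta}(z)=\Phi^N_{\phi,\theta}(z)$, since with $K=D_{\phi,\theta}$ the constraint $\|w^N P_N\|_K\leq 1$ is exactly $\|e^{-N\phi}P_N\|_{D_{\phi,\theta}}\leq 1$; hence $\Phi_{K,Q,\theta}=\Phi_{\phi,\theta}$. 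Applying Theorem \ref{PolynomialGreen} to $(K,Q)$ gives $V_{K,Q,\theta}=\log\Phi_{K,Q,\theta}$, and assembling the identifications yields the desired equality $V_{\phi,\theta}=\log\Phi_{\phi,\theta}$.

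The main subtlety I expect concerns the passage from $V_{K,Q,\theta}$ to $V^\ast_{K,Q,\theta}$: Theorem \ref{PolynomialGreen} produces the identity for the un-regularized function, whereas the reduction in the preceding paragraph is phrased in terms of $V^\ast_{K,Q,\theta}$. This is reconciled by noting that $\log\Phi_{K,Q,\theta}$ is plurisubharmonic on $\cd\setminus\{0\}$ as an increasing limit of the $\frac{1}{N}\log\Phi^N_{K,Q,\theta}$ (the limit property was proved in the discussion following the definition of $\Phi_{K,Q,\theta}$, following Proposition \ref{SupLim}), so away from the origin it equals its own upper semicontinuous regularization, and $V_{K,Q,\theta}=V^\ast_{K,Q,\theta}$ there. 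At the origin both sides are $-\infty$ when $\theta>0$, and for $\theta=0$ this is the classical identity \eqref{REcovertheta=0} for continuous admissible weights. Together these give $V_{\phi,\theta}=\log\Phi_{\phi,\theta}$ on all of $\cd$.
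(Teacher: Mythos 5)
Your overall plan is the right one and matches the paper's implicit route: set $K:=D_{\phi,\theta}$, $Q:=\phi|_K$, verify that this lands you in the hypotheses of Theorem~\ref{PolynomialGreen}, observe that the two polynomial-defined $\Phi$'s coincide, and transfer the equality back via the identification $V^\ast_{K,Q,\theta}=V^\ast_{\phi,\theta}=V_{\phi,\theta}$ established just before Corollary~\ref{SupportPhi}. The paper states the corollary with no proof at all, so your filling-in of the admissibility and compactness checks for $(K,Q)$ is reasonable and welcome.

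The one step I would not let pass as written is the final paragraph, where you try to bridge $V_{K,Q,\theta}$ and $V^\ast_{K,Q,\theta}$ by claiming $\log\Phi_{K,Q,\theta}$ is plurisubharmonic on $\cd\setminus\{0\}$ ``as an increasing limit'' of the $\tfrac1N\log\Phi^N_{K,Q,\theta}$. What Proposition~\ref{SupLim} and its analogue here actually give, via Fekete's lemma applied to the superadditive sequence $\log\Phi^N_{K,Q,\theta}(z)$, is $\sup_N = \lim_N$; this does \emph{not} assert that the sequence $\tfrac1N\log\Phi^N_{K,Q,\theta}$ is monotone in $N$, and indeed superadditivity alone permits non-monotone $a_N/N$. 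Moreover each $\tfrac1N\log\Phi^N_{K,Q,\theta}$ is itself a supremum over an infinite family of plurisubharmonic functions, so plurisubharmonicity of the individual terms is not automatic either. Thus the claimed plurisubharmonicity of $\log\Phi_{K,Q,\theta}$, and hence the equality $V_{K,Q,\theta}=V^\ast_{K,Q,\theta}$, is not established by this argument.

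Fortunately the gap has a one-line fix that avoids all of this. Since $\phi$ is continuous, $V_{\phi,\theta}=V^\ast_{\phi,\theta}$ is itself plurisubharmonic and belongs to $L_\theta$; and on $K=D_{\phi,\theta}$ one has $V_{\phi,\theta}=\phi=Q$, so $V_{\phi,\theta}$ is a competitor in the definition of $V_{K,Q,\theta}$, giving $V_{\phi,\theta}\leq V_{K,Q,\theta}$. Combined with the chain $V_{K,Q,\theta}\leq V^\ast_{K,Q,\theta}=V^\ast_{\phi,\theta}=V_{\phi,\theta}$ you already have, this forces $V_{K,Q,\theta}=V_{\phi,\theta}$ outright, and Theorem~\ref{PolynomialGreen} then completes the proof. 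I would also tighten the non-pluripolarity check slightly: it is not enough that $(dd^cV^\ast_{\phi,\theta})^d$ has positive total mass, since for $\theta>0$ some of that mass sits at the origin; you should note that the mass carried off the origin is strictly positive (indeed of order $(1-\theta^d)(2\pi)^d$) and that a locally bounded plurisubharmonic function cannot concentrate Monge--Amp\`ere mass on a pluripolar set.
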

\begin{cor}Let $0\leq \theta<1$. Let $\phi$ be a globally defined continuous admissible weight, then we have
$V_{\phi,\theta}=\log{{\widetilde{\Phi}}_{\phi,\theta}}$, where
\begin{equation}
{\widetilde{\Phi}}_{\phi,\theta}^{N}(z):=\sup\{|P(z)|^{1/N}\,|\,
\|e^{-N\phi}P_N\|_{\cd} \leq 1\text{ where }P_N\in \pi_{N,\theta}\}
\end{equation} and
\begin{equation}
\widetilde{\Phi}_{{\phi,\theta}}:=\sup_{N}\{{\widetilde{\Phi}}_{\phi,\theta}^{N}\}.
\end{equation}
\end{cor}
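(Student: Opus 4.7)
The plan is to reduce to the immediately preceding corollary by showing that $\widetilde{\Phi}_{\phi,\theta}^{N}=\Phi_{\phi,\theta}^{N}$ for each $N\geq 1$; once this is in hand, taking the supremum over $N$ yields $\widetilde{\Phi}_{\phi,\theta}=\Phi_{\phi,\theta}$ and the desired identity $V_{\phi,\theta}=\log\widetilde{\Phi}_{\phi,\theta}$ follows from the preceding corollary.

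The inequality $\widetilde{\Phi}_{\phi,\theta}^{N}\leq \Phi_{\phi,\theta}^{N}$ is immediate from the inclusion $D_{\phi,\theta}\subset\cd$: the constraint $\|e^{-N\phi}P_{N}\|_{\cd}\leq 1$ is stronger than $\|e^{-N\phi}P_{N}\|_{D_{\phi,\theta}}\leq 1$, so the family defining $\widetilde{\Phi}_{\phi,\theta}^{N}$ sits inside the one defining $\Phi_{\phi,\theta}^{N}$, and taking the sup respects this.

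For the reverse inequality I would take an arbitrary $P_{N}\in\pi_{N,\theta}$ with $\|e^{-N\phi}P_{N}\|_{D_{\phi,\theta}}\leq 1$ and promote this estimate from $D_{\phi,\theta}$ to all of $\cd$. Set $u:=\frac{1}{N}\log|P_{N}|$; since $P_{N}$ is a $\theta$-incomplete polynomial of degree at most $N$, one has $u\in L_{\theta}$, and the hypothesis rephrases as $u\leq \phi$ on $D_{\phi,\theta}$. Corollary \ref{SupportPhi}(iv) then gives $u\leq V_{\phi,\theta}$ everywhere on $\cd$, and because every competitor in the defining family of $V_{\phi,\theta}$ lies below $\phi$, we have $V_{\phi,\theta}\leq \phi$ on $\cd$. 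Combining these, $u\leq \phi$ on $\cd$, which is exactly $\|e^{-N\phi}P_{N}\|_{\cd}\leq 1$. Thus the same $P_{N}$ is a competitor for $\widetilde{\Phi}_{\phi,\theta}^{N}$, giving $\Phi_{\phi,\theta}^{N}\leq \widetilde{\Phi}_{\phi,\theta}^{N}$.

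The only substantive step is the appeal to Corollary \ref{SupportPhi}(iv), which is really the $\theta$-incomplete domination principle (Theorem \ref{ThetaDominationPrinciple}) in disguise; the rest is bookkeeping with the definitions. Combining the two inequalities at each level $N$ yields $\widetilde{\Phi}_{\phi,\theta}=\Phi_{\phi,\theta}$, and the preceding corollary closes the argument.
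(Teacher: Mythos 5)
Your proof is correct and follows essentially the same route as the paper: show that for each $P_N\in\pi_{N,\theta}$ the constraint $\|e^{-N\phi}P_N\|_{D_{\phi,\theta}}\leq 1$ is equivalent to $\|e^{-N\phi}P_N\|_{\cd}\leq 1$, promoting the estimate from $D_{\phi,\theta}$ to $\cd$ via $\frac{1}{N}\log|P_N|\leq V_{\phi,\theta}\leq\phi$. The only cosmetic difference is that you invoke Corollary \ref{SupportPhi}(iv) explicitly where the paper just says ``hence it is a competitor for $V_{\phi,\theta}$,'' which is in fact the same appeal left implicit; your version is slightly more careful on this point.
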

\begin{proof}
It is sufficient to show that for any  $ P_N\in \pi_{N,\theta},\, \|e^{-N\phi}P_N\|_{\cd} \leq 1$ { if and only if  }
$\|e^{-N\phi} P_N\|_{{D_{\phi,\theta}}} \leq 1$. The "only if" direction is trivial. For the other direction let $ P_N\in
\pi_{N,\theta}$  and $\|e^{-N\phi}P_N\|_{{D_{\phi,\theta}}} \leq 1$. We will show that $\|e^{-N\phi}P_N\|_{\cd} \leq
1$.  We have $e^{-N\phi(z)}P_N(z) \leq 1$ for $z \in {D_{\phi,\theta}}$ so we get $\frac{1}{N}\log|P_N(z)| \leq \phi(z)
\text{ on } {D_{\phi,\theta}}$. Hence it is a competitor for the extremal function $V_{\phi,\theta}$, and we have
$\frac{1}{N}\log|P_N(z)| \leq V_{\phi,\theta}(z) \leq~\phi(z) \text{ for all } z \in~\cd$. Therefore we get
$e^{-N\phi(z)}P_N(z) \leq 1 \text{ for all } z \in \cd$.
\end{proof}

\section{Asymptotics}
Let $K$ be a compact set in \cd\,  and $\mu$ be a Borel probability measure whose support is in $K$. We say that the pair
$(K,\mu)$ satisfies a \textbf{Bernstein-Markov property} if for any $\eps>0$ there exists $C>0$ such that
\begin{equation}\label{BM-General}
\|P\|_K\leq C e^{\eps N}\|P\|_{L^2(\mu)}
\end{equation}
holds  for all polynomials of degree at most $N$. Equivalently, there exists $M_N$
with  $(M_N)^{\frac{1}{N}}\to~ 1  \text{ as } N\to \infty$ such that the following
inequality holds for all polynomials of degree at most $N$:
\begin{equation}\label{BM-MnForm}
  \|P\|_K \leq M_N \|P\|_{L^2(\mu)}  .
\end{equation}

We remark that if $K$ is a regular compact set then $(K,(dd^c V_K)^d)$ satisfies the
Bernstein-Markov property. See \cite{Zeriahi} for details.

We fix $0\leq \theta \leq 1$. If  these inequalities are satisfied for all $P\in \pi_{N,\theta}$ for all $N\geq 0$, then we
say the pair  $(K,\mu)$ satisfies a \textbf{Bernstein-Markov property for $\theta$-incomplete polynomials}.

Let $\mu$ be a measure such that  $(K,\mu)$ satisfies the Bernstein-Markov property for $\theta$-incomplete
polynomials. Let $\{P_j\}$ be an orthonormal basis of $\pi_{N,\theta}$ with respect to the inner product $\langle f,g
\rangle :=\int f\bar{g}\,d\mu$. We define the Bergman function
$K_{N,\theta}(z,w):=\sum^{d(N,\theta)}_{j=1}P_j(z)\overline{P_j(w)}$, where $d(N,\theta)$ is the dimension of
$\pi_{N,\theta}$.

The following two lemmas are generalizations of results of Bloom and Shiffman
\cite{BloomShiffman}.

\begin{lem}  If $(K,\mu) $ satisfies the Bernstein-Markov property for $\theta$-incomplete polynomials, then for
all $\epsilon >0$, there exists
$C>0$ such that
\begin{equation}\label{BSL1}\frac{(\Phi_{K,\theta,N}(z))^2}{d(N,\theta)} \leq
{K_{N,\theta}(z,z)} \leq C e^{\epsilon N}{(\Phi_{K,\theta,N}(z))^2}
d(N,\theta)\end{equation}  for all $z\in \cd$.
\end{lem}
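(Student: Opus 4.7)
The plan is to prove both inequalities directly from the reproducing/extremal description of the Bergman function and the Bernstein-Markov hypothesis, respectively.

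For the lower bound, I would use the standard Cauchy-Schwarz trick. Any $f \in \pi_{N,\theta}$ can be expanded as $f = \sum_{j=1}^{d(N,\theta)} c_j P_j$ with $c_j = \langle f, P_j\rangle$, so
\[
|f(z)|^2 = \Bigl|\sum_j c_j P_j(z)\Bigr|^2 \leq \Bigl(\sum_j |c_j|^2\Bigr)\Bigl(\sum_j |P_j(z)|^2\Bigr) = \|f\|_{L^2(\mu)}^2 \, K_{N,\theta}(z,z).
\]
Since $\mu$ is a probability measure with $\supp \mu \subset K$, we have $\|f\|_{L^2(\mu)} \leq \|f\|_K$. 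Restricting to $f \in \pi_{N,\theta}$ with $\|f\|_K \leq 1$ and taking the supremum over such $f$ gives the (stronger) bound $\Phi_{K,\theta,N}(z)^2 \leq K_{N,\theta}(z,z)$, which of course implies $\Phi_{K,\theta,N}(z)^2/d(N,\theta) \leq K_{N,\theta}(z,z)$.

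For the upper bound, I would apply the Bernstein-Markov property for $\theta$-incomplete polynomials in the form \eqref{BM-MnForm}: there is a sequence $M_N$ with $M_N^{1/N} \to 1$ such that $\|P\|_K \leq M_N \|P\|_{L^2(\mu)}$ for every $P \in \pi_{N,\theta}$. Since each $P_j$ satisfies $\|P_j\|_{L^2(\mu)} = 1$, this yields $\|P_j\|_K \leq M_N$, so $P_j/M_N$ is an admissible competitor in the definition of $\Phi_{K,\theta,N}$, giving $|P_j(z)| \leq M_N \, \Phi_{K,\theta,N}(z)$. Squaring and summing over $j$ produces
\[
K_{N,\theta}(z,z) = \sum_{j=1}^{d(N,\theta)} |P_j(z)|^2 \leq d(N,\theta)\, M_N^2 \, \Phi_{K,\theta,N}(z)^2.
\]
Given $\eps>0$, the condition $M_N^{1/N}\to 1$ lets us choose $C>0$ so that $M_N^2 \leq C e^{\eps N}$ for every $N$, completing the upper estimate.

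Neither direction presents a serious obstacle; the only mildly delicate point is packaging the $M_N$ growth uniformly in $N$ (including small $N$) into a single constant $C$, which is standard once $M_N^{1/N}\to 1$ is invoked. Note that the argument does not use $\theta$ in any essential way beyond the fact that orthogonal expansion, Cauchy-Schwarz, and the Bernstein-Markov hypothesis are all restricted to the fixed subspace $\pi_{N,\theta}$, so the proof proceeds formally as in the classical ($\theta=0$) Bloom-Shiffman setting.
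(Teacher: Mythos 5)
Your proof is correct, and the upper bound is essentially the same argument the paper gives (the paper uses the $Ce^{\eps N}$ form of Bernstein--Markov directly, you use the $M_N$ form; the two are declared equivalent in the paper, so this is purely cosmetic). Your lower bound, however, is a genuinely different and in fact sharper argument. The paper proceeds via the reproducing identity $P(z)=\int_K K_{N,\theta}(z,w)P(w)\,d\mu(w)$, applies the pointwise Cauchy--Schwarz bound $|K_{N,\theta}(z,w)|\le K_{N,\theta}(z,z)^{1/2}K_{N,\theta}(w,w)^{1/2}$, and then another Cauchy--Schwarz on the $\mu$-integral to produce the factor $d(N,\theta)^{1/2}$, yielding only $\Phi_{K,\theta,N}(z)^2/d(N,\theta)\le K_{N,\theta}(z,z)$. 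You instead expand $f=\sum_j c_jP_j$ and apply Cauchy--Schwarz once to the coefficient sequence, using $\sum|c_j|^2=\|f\|^2_{L^2(\mu)}\le\|f\|_K^2$ (the step requiring $\mu$ to be a probability measure on $K$); this gives the cleaner bound $\Phi_{K,\theta,N}(z)^2\le K_{N,\theta}(z,z)$, which discards the $d(N,\theta)^{-1}$ factor entirely and of course implies the stated inequality. Both give what the subsequent lemma needs (the factor $d(N,\theta)$ disappears in the $N$-th root limit either way), but your route is more direct and loses nothing.
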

\begin{proof}
To show the first inequality we take $P\in \pi_{N,\theta}$ and $\|P\|_K\leq 1$.
Then we have
\begin{eqnarray}
 \nonumber   |P(z)|  &   =  & \left|\int_K K_{N,\theta}(z,w)P(w)d\mu(w) \right| \, \leq \int_K |K_{N,\theta}(z,w)|d\mu(w)\\
 \nonumber             &\leq & \int_K (K_{N,\theta}(z,z))^{\frac{1}{2}}(K_{N,\theta}(w,w))^{\frac{1}{2}}d\mu(w)=(K_{N,\theta}(z,z))^{\frac{1}{2}}\|(K_{N,\theta}(w,w))^{\frac{1}{2}}\|_{L^1(\mu)}\\
 \nonumber             &\leq &
 (K_{N,\theta}(z,z))^{\frac{1}{2}}\|1\|_{L^2(\mu)}\|(K_{N,\theta}(w,w))\|_{L^2(\mu)}=(K_{N,\theta}(z,z))^{\frac{1}{2}}d(N,\theta)^{\frac{1}{2}}.
\end{eqnarray}
Taking the supremum  of all $P$ as above we have
$\Phi_{K,\theta,N}(z)\leq
(K_{N,\theta}(z,z))^{\frac{1}{2}}d(N,\theta)^{\frac{1}{2}}$, which gives the
first inequality.

For the second inequality, let  $\{P_j\}$ be an orthonormal basis
of $\pi_{N,\theta}$. Then by the Bernstein-Markov property we have
$\|P_j\|_K\leq Ce^{\eps N}$, hence $|P_j(z)|\leq \|P_j\|_K
\Phi_{K,\theta,N}(z) \leq Ce^{\eps N}\Phi_{K,\theta,N}(z), \text{
for all } P_j$. Thus we have
$$K_{N,\theta}(z,z)=\sum\limits_{j=1}^{d(N,\theta)} |P_j(z)|^2\leq  d(N,\theta) C^2e^{2\eps N}(\Phi_{K,\theta,N}(z))^2.$$
Hence we get the second inequality.
\end{proof}

\begin{lem} Let $0\leq \theta<1$. Let $K$ be a regular compact set in \cd. If $(K,\mu) $ satisfies the Bernstein-Markov property for
$\theta$-incomplete polynomials, then we have $$\frac{1}{2N}\log
K_{N,\theta}(z,z)\rightarrow V_{K,\theta}(z)$$
 uniformly on compact subsets of $\cd\setminus\hat{K}_{\theta}$.
\end{lem}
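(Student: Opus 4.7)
The plan is to sandwich $\frac{1}{2N}\log K_{N,\theta}(z,z)$ between quantities that both converge to $V_{K,\theta}$ uniformly on compact subsets of $\mathbb{C}^d \setminus \hat{K}_\theta$, using the inequality \eqref{BSL1} from the previous lemma together with Theorem \ref{BSuniform}.

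First I would fix a compact set $E \subset \mathbb{C}^d \setminus \hat{K}_\theta$ and an arbitrary $\epsilon > 0$. Taking logarithms of both sides of \eqref{BSL1} and dividing by $2N$ yields
\begin{equation*}
\frac{1}{N}\log \Phi_{K,\theta,N}(z) - \frac{\log d(N,\theta)}{2N} \;\leq\; \frac{1}{2N}\log K_{N,\theta}(z,z) \;\leq\; \frac{1}{N}\log \Phi_{K,\theta,N}(z) + \frac{\log(C\, d(N,\theta))}{2N} + \frac{\epsilon}{2}
\end{equation*}
for all $z \in \mathbb{C}^d$. Next I would observe that $d(N,\theta) = \dim \pi_{N,\theta}$ grows only polynomially in $N$ (it is bounded by the dimension of the full space of polynomials of degree at most $N$, which is $\binom{N+d}{d}$), so $\frac{1}{2N}\log d(N,\theta) \to 0$ as $N \to \infty$.

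By Theorem \ref{BSuniform} applied to the regular compact set $K$, the sequence $\frac{1}{N}\log \Phi_{K,\theta,N}$ converges uniformly to $V_{K,\theta}$ on $E$. Combining this with the vanishing of the $d(N,\theta)$ and $C$ terms, for all $N$ sufficiently large the upper and lower bounds above both lie within $\epsilon$ of $V_{K,\theta}(z)$ uniformly for $z \in E$, and the middle term $\frac{1}{2N}\log K_{N,\theta}(z,z)$ is therefore within $\epsilon$ of $V_{K,\theta}(z)$ uniformly on $E$. Since $\epsilon > 0$ was arbitrary, this establishes the desired uniform convergence.

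I do not anticipate a real obstacle here — the proof is essentially a routine application of the Bernstein-Markov sandwich from the previous lemma, the uniform convergence from Theorem \ref{BSuniform}, and the polynomial growth of $d(N,\theta)$. The only mild subtlety is making sure the $\epsilon N$ factor coming from the Bernstein-Markov constant is absorbed correctly after dividing by $2N$, which is why the bound is stated with an arbitrary $\epsilon$ in \eqref{BSL1}.
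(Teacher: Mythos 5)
Your proposal is correct and follows essentially the same route as the paper: take logarithms in \eqref{BSL1}, use the polynomial bound $d(N,\theta)\leq\binom{N+d}{d}$ to kill the dimension term, absorb the $C e^{\epsilon N}$ factor, and invoke the uniform convergence of $\frac{1}{N}\log\Phi_{K,\theta,N}$ to $V_{K,\theta}$ from Theorem \ref{BSuniform}. The only cosmetic difference is that the paper isolates $\frac{1}{N}\log\bigl(K_{N,\theta}(z,z)/\Phi_{K,\theta,N}(z)^2\bigr)$ and shows it tends to $0$ uniformly on all of $\cd$, while you keep $\frac{1}{2N}\log K_{N,\theta}$ sandwiched directly between the two modified $\Phi$ bounds.
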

\begin{proof}
We remark that $d(N,\theta)\leq d(N):=d(N,0)$ and $d(N)={ {N+d} \choose d }\leq
(N+d)^d$.

Taking logarithms in \eqref{BSL1}, we obtain
$$-\frac{\log d(N,\theta)}{N} \leq
\frac{\log(\frac{K_{N,\theta}(z,z)}{(\Phi_{K,\theta,N}(z))^2}
)}{N} \leq \frac{\log(Ce^{\epsilon N} d(N,\theta) )}{N}.$$
By the above observation we get
$$-\frac{d}{N}\log (N+d) \leq
\frac{1}{N}\log(\frac{K_{N,\theta}(z,z)}{(\Phi_{K,\theta,N}(z))^2} ) \leq \frac{\log
C}{N}+ \epsilon+ \frac{d}{N}\log (N+d).$$ Since $\eps $  is arbitrary we have
$\frac{1}{N}\log(\frac{K_{N,\theta}(z,z)}{(\Phi_{K,\theta,N}(z))^2} ) \to 0$, which
gives the desired result by Theorem \ref{BSuniform}.
\end{proof}

Let $K$ be  a compact set with admissible weight $w$ on $K$.  Let $\mu$ be a Borel
probability measure on $K$. We say the triple $(K,\mu,w)$   satisfies a
\textbf{weighted Bernstein-Markov property} if there exists $M_N>0$  with
$(M_N)^{1/N}\to~ 1 $ such that for any polynomial $P_N$  of degree $N$,
\begin{equation}\label{WeightedBM}
  \|w^NP_N\|_K \leq M_N \|w^NP_N\|_{L^2(\mu)}.
\end{equation}

We remark that if $K$ is locally regular and $Q$ is continuous then $(K,
(dd^cV_{K,Q})^d,w)$ satisfies a  weighted Bernstein-Markov property by Corollary 3.1 of \cite{BloomWeightedPolynomialsWeightedPluripotentialTheory}. Also
 \\$(D_\phi, (dd^cV_\phi)^d, e^{-\phi}) $ satisfies the weighted Bernstein-Markov
property if $\phi$ is continuous by Theorem 4.5 of \cite{BermanBoucksom}.

\begin{thm}\label{Asymptotic}Let $K$ be  a   compact set with a continuous admissible weight $w$ on $K$. Let $\mu$
be a probability measure on $K$ such that $(K,\mu,w)$ satisfies a weighted
Bernstein-Markov property. Then   we have
\begin{equation}
 \lim_{N\to\infty} \sup_{k=1, \dots,d(N)} (|B_{k,N}(z)|)^{1/N}=
 e^{V_{K,Q}(z)},
\end{equation}where $\{B_{k,N}\}_{k=1}^{d(N)}$ is an orthonormal basis for
the polynomials with degree  at most $N$ with respect to the measure $w^{2N}\mu
$.
\end{thm}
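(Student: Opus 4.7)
The plan is to sandwich $\sup_k|B_{k,N}(z)|^{1/N}$ between two quantities converging pointwise to $e^{V_{K,Q}(z)}$. Specifically, I will show
\begin{equation*}
d(N)^{-1/(2N)}\, \Phi^N_{K,Q,0}(z) \;\leq\; \sup_{k}|B_{k,N}(z)|^{1/N} \;\leq\; M_N^{1/N}\, \Phi^N_{K,Q,0}(z),
\end{equation*}
where $\Phi^N_{K,Q,0}$ is the quantity introduced before Theorem \ref{PolynomialGreen} in the case $\theta=0$. Since $M_N^{1/N}\to 1$ by the weighted Bernstein--Markov hypothesis, $d(N)^{1/(2N)}\to 1$ (as $d(N)={N+d\choose d}\leq(N+d)^d$), and $\Phi^N_{K,Q,0}(z)\to \Phi_{K,Q,0}(z)=e^{V_{K,Q}(z)}$ pointwise by Theorem \ref{PolynomialGreen} with $\theta=0$, the sandwich will give the claim.

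The upper estimate comes from Bernstein--Markov. Because $\{B_{k,N}\}$ is orthonormal with respect to $w^{2N}\mu$ and $\mu$ is a probability measure, each satisfies $\|w^N B_{k,N}\|_{L^2(\mu)}=1$, so \eqref{WeightedBM} yields $\|w^N B_{k,N}\|_K\leq M_N$. Thus $B_{k,N}/M_N$ is a valid competitor in the supremum defining $\Phi^N_{K,Q,0}(z)$, which gives $|B_{k,N}(z)|^{1/N}\leq M_N^{1/N}\,\Phi^N_{K,Q,0}(z)$ uniformly in $k$.

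The lower estimate is a Cauchy--Schwarz / Bergman kernel argument. For any polynomial $P\in\pi_{N,0}$ with $\|w^N P\|_K\leq 1$, expand $P=\sum_k c_k B_{k,N}$ in the orthonormal basis. Using that $\mu$ is a probability measure,
\begin{equation*}
\sum_k|c_k|^2 \;=\; \|w^N P\|_{L^2(\mu)}^2 \;\leq\; \|w^N P\|_K^2 \;\leq\; 1,
\end{equation*}
and Cauchy--Schwarz yields $|P(z)|^2\leq\bigl(\sum_k|c_k|^2\bigr)\bigl(\sum_k|B_{k,N}(z)|^2\bigr)\leq d(N)\sup_k|B_{k,N}(z)|^2$. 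Taking the supremum over all such $P$ and raising to the $1/N$-th power gives $\Phi^N_{K,Q,0}(z)\leq d(N)^{1/(2N)}\sup_k|B_{k,N}(z)|^{1/N}$, as claimed.

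The main conceptual input is Theorem \ref{PolynomialGreen} identifying $V_{K,Q}$ with $\log\Phi_{K,Q,0}$ pointwise; the rest is a fairly standard orthonormal-basis argument. The only mild subtlety is that $P=\sum_k c_k B_{k,N}$ is a priori an $L^2(w^{2N}\mu)$-expansion, but $\pi_{N,0}$ is finite-dimensional, so the sum is finite and Cauchy--Schwarz applies pointwise at each fixed $z$. No regularity of $K$ is needed beyond what Theorem \ref{PolynomialGreen} already assumes, namely compactness of $K$ and continuity of $w$.
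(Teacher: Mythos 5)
Your proposal is correct and takes essentially the same approach as the paper's own proof: the upper bound comes from Bernstein--Markov putting each (rescaled) $B_{k,N}$ among the competitors for $\Phi^N_{K,Q,0}$, and the lower bound from expanding an arbitrary competitor in the orthonormal basis and applying Cauchy--Schwarz, with Theorem~\ref{PolynomialGreen} (in the form $e^{V_{K,Q}}=\lim_N\Phi^N_{K,Q,0}$) supplying the identification. The only cosmetic difference is that you apply Cauchy--Schwarz to the sum $\sum_k c_k B_{k,N}(z)$ directly, getting the marginally sharper factor $d(N)^{1/2}$, whereas the paper bounds each $|c_k|\leq 1$ and sums moduli, getting $d(N)$; both factors vanish after taking $N$-th roots.
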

We remark that unlike the unweighted case, where $w=1$, each time $N$ changes the
basis and the $L^2$ norms change.

\begin{proof}
 By the weighted Bernstein-Markov property we have
 $$ \|w^NB_{k,N}\|_K \leq M_N \|w^NB_{k,N}\|_{L^2(\mu)},$$
so we get
$$ \frac{1}{N}\log\frac{|B_{k,N}(z)|}{M_N}\leq Q(z) \text{ on } K.$$Hence
$$ \frac{1}{N}\log\frac{|B_{k,N}(z)|}{M_N}\leq V_{K,Q}(z) \text{ on } \cd .$$
Since $(M_N)^{1/N}\to 1$, we have $\limsup\limits_{N\to\infty}(\sup\limits_{k=1, \dots,d(N)}
(|B_{k,n}(z)|)^{1/N}\leq \limsup\limits_{N\to\infty}(e^{V_{K,Q}(z)}
M_N^{\frac{1}{N}})\leq e^{V_{K,Q}(z)}$.

Now we want to show that $\liminf\limits_{N\to\infty}(\sup_{k=1}^{d(N)}
(|B_{k,N}(z)|)^{1/N}\geq e^{V_{K,Q}(z)}$, for $V_{K,Q}(z) >0$.

Let $P $ be a polynomial of degree at most  $N$ such that $\|w^N
P\|_{K} \leq 1$. We will write $w= e^{-Q}$. Since
$\{B_{k,N}\}_{k=1}^{d(N)}$ is an orthonormal basis we have
$$P(z)=\sum\limits_{j=1}^{d(N)}\left( \int_K P\bar{B}_{j,N}e^{-2NQ}d\mu
\right)B_{j,N}(z).$$ By the triangle inequality we have
$$|P(z)|\leq\sum\limits_{j=1}^{d(N)}\left| \int_K P\bar{B}_{j,N}e^{-2NQ}d\mu
\right| |B_{j,N}(z)|.$$
By the Cauchy-Schwarz inequality we have
$$|P(z)|\leq\sum\limits_{j=1}^{d(N)}\left| \left(\int_K |P|^2e^{-2NQ}d\mu\right)^{1/2} \left(\int_K|{B}_{j,N}|^2e^{-2NQ}d\mu
\right)^{1/2}\right| | B_{j,N}(z)|.$$Now since $\|w^NP\|_{K} \leq 1$ and
$\{B_{k,N}\}_{k=1}^{d(N)}$ is an orthonormal basis we get
$$|P(z)|\leq \sum\limits_{j=1}^{d(N)} | B_{j,N}(z)|.$$
This implies that
\begin{equation}
 |P(z)|
\leq (d(N)) \sup_{j=1}^{d(N)}
(|B_{j,N}(z)|)\text{ for any } z\in \cd.
\end{equation}
We fix   $z\in \cd$.
Then we have
\begin{equation}\label{Esas}e^{V_{K,Q}(z)}\leq\liminf_{N\to\infty}\left(
\sup_{P\in
\pi_{N,0},\, \|w^NP\|_{K} \leq 1}
|P(z)|^{1/N}\right) \leq \liminf_{N\to\infty}(d(N))^{1/N}\left(\sup_{j=1}^{d(N)}
(|B_{j,N}(z)|)\right)^{1/N}.\end{equation}
Here $e^{V_{K,Q}}\leq\liminf\limits_{N\to\infty}\left( \sup\limits_{P\in
\pi_{N,0},\, \|w^NP\|_{K} \leq 1}|P(z)|^{1/N}\right)$ follows from \eqref{REcovertheta=0}. Now
since $(d(N))^{1/N} \to 1$ we get the result.
\end{proof}

\begin{cor}Let $\phi $ be a globally defined continuous admissible weight and  $\mu$ be a Borel probability measure on
$D_\phi$ such that $(D_\phi,\mu,e^{-\phi})$  satisfies a weighted Bernstein-Markov property. Then we have
\begin{equation}
 \lim_{N\to\infty} \sup_{k=1, \dots,d(N)} (|B_{k,N}(z)|)^{1/N}=
 e^{V_{\phi}(z)}.
\end{equation}
Here $\{B_{k,N}\}_{k=1}^{d(N)}$ is an orthonormal basis for the polynomials with
degree  at most $N$ with respect to the measure $e^{-2N\phi}\mu $.
\end{cor}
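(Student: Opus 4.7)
The plan is to obtain this corollary directly from Theorem \ref{Asymptotic} via the same reduction from global weights to the compact-set setting that is recorded earlier in Section \ref{ThetaWeihgtedIncPP} (for general $\theta$, specialized here to $\theta=0$). Set $K := D_\phi$ and $Q := \phi|_{K}$, so that $w = e^{-\phi}|_K$. We have already noted that $D_\phi$ is compact and that $V_{K,Q}^\ast = V_\phi^\ast$; since $\phi$ is continuous these regularizations drop out and $V_{K,Q} = V_\phi$ everywhere on $\mathbb{C}^d$.

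First I would verify that $(K, Q, w)$ meets all the hypotheses of Theorem \ref{Asymptotic}. Continuity of $w = e^{-\phi}|_K$ is immediate from continuity of $\phi$. Admissibility on the closed set $K$ is automatic: $w > 0$ everywhere on $K$, and the growth condition $\phi(z) \geq (1+\varepsilon)\log|z|$ for $|z| \gg 1$ gives $|z|w(z) \leq |z|^{-\varepsilon} \to 0$, although this is moot because $K = D_\phi$ is bounded. Non-pluripolarity of $\{w > 0\} = D_\phi$ is built into the fact that $V_\phi^\ast \in L^+$. The Bernstein-Markov hypothesis on $(D_\phi, \mu, e^{-\phi})$ is literally the weighted Bernstein-Markov hypothesis on $(K, \mu, w)$.

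Next I would invoke Theorem \ref{Asymptotic} applied to this data. The theorem yields
\begin{equation*}
\lim_{N\to\infty} \sup_{k=1,\dots,d(N)}\bigl(|B_{k,N}(z)|\bigr)^{1/N} = e^{V_{K,Q}(z)},
\end{equation*}
where $\{B_{k,N}\}_{k=1}^{d(N)}$ is the orthonormal basis for polynomials of degree at most $N$ with respect to $w^{2N}\mu = e^{-2N\phi}\mu$, which is exactly the basis in the statement. Substituting $V_{K,Q} = V_\phi$ gives the conclusion.

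There is no genuine obstacle here beyond bookkeeping: the only small point worth emphasizing in the write-up is the identification $V_{K,Q} = V_\phi$, which is unambiguous because both functions are continuous (since $\phi$ is continuous and $K$ is locally regular for the relevant compact $D_\phi$, or by the domination argument already used in the text to show $V_{K,Q,\theta}^\ast = V_{\phi,\theta}^\ast$). Once that identification is stated, the corollary is a direct specialization of the theorem.
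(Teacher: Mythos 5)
Your proposal is correct and is exactly the specialization the paper intends: the corollary is stated without proof in the text precisely because it is Theorem \ref{Asymptotic} applied to $K=D_\phi$, $Q=\phi|_K$, $w=e^{-\phi}|_K$, together with the identification $V_{K,Q}=V_\phi$ recorded in Section \ref{ThetaWeihgtedIncPP} for $\theta=0$. One small remark: you do not need local regularity of $D_\phi$ to clinch $V_{K,Q}=V_\phi$; since $K\subset\cd$ the defining family for $V_\phi$ is contained in that for $V_{K,Q}$, giving $V_\phi\leq V_{K,Q}\leq V_{K,Q}^\ast=V_\phi^\ast=V_\phi$, where the last equality uses continuity of $\phi$.
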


If  \eqref{WeightedBM} holds for any $P_N \in \pi_{N,\theta}$ then we say
$(K,\mu,w)$ satisfies a \textbf{weighted Bernstein-Markov property for
$\theta$-incomplete polynomials.}

We remark that if a triple $(K,\mu, w)$  satisfies a weighted Bernstein-Markov property, then  it satisfies the weighted Bernstein-Markov property for
$\theta$-incomplete polynomials.

Using only the orthonormal basis for
$\pi_{N,\theta}$ and using Theorem \ref{proof1st} instead of
\eqref{REcovertheta=0} we get the
following theorem by the same proof as for Theorem \ref{Asymptotic}.
\begin{thm}Let $0\leq \theta<1$.
Let $K$ be  a   compact set with   a continuous admissible weight $w$ on $K$. Let $\mu$ be a measure on $K$ such that
$(K,\mu,w)$ satisfies the weighted Bernstein-Markov property for $\theta$-incomplete polynomials. Then  we have
\begin{equation}
 \lim_{N\to\infty} \sup_{k=1, \dots,d(N,\theta)} (|B^\theta_{k,N}(z)|)^{1/N}=
 e^{V_{K,Q,\theta}(z)},
\end{equation}
where  $\{B^\theta_{k,N}\}_{k=1}^{d(N,\theta)}$ is an orthonormal basis for $\pi_{N,\theta}$  with respect to the measure $w^{2N}\mu $.
\end{thm}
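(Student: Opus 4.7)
The plan is to follow the proof of Theorem \ref{Asymptotic} verbatim, substituting the $\theta$-incomplete versions of each ingredient: $\pi_{N,\theta}$ in place of the full polynomial space, $d(N,\theta)$ in place of $d(N)$, $V_{K,Q,\theta}$ in place of $V_{K,Q}$, and Theorem \ref{PolynomialGreen} in place of the unweighted recovery formula \eqref{REcovertheta=0}.

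For the upper bound, I would start from the weighted Bernstein-Markov property for $\theta$-incomplete polynomials applied to each basis element: since $\|B^\theta_{k,N}\|_{L^2(w^{2N}\mu)}=1$, we get $\|w^N B^\theta_{k,N}\|_K \leq M_N$. Taking logarithms,
$$\tfrac{1}{N}\log|B^\theta_{k,N}(z)| - \tfrac{1}{N}\log M_N \leq Q(z) \quad \text{on } K.$$
The key observation is that, because $B^\theta_{k,N}\in \pi_{N,\theta}$, the function $\tfrac{1}{N}\log|B^\theta_{k,N}|$ lies in $L_\theta$ (as noted in Section 2 after the definition of $\pi_{N,\theta}$), so it is a competitor in the defining family of $V_{K,Q,\theta}$. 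Hence $\tfrac{1}{N}\log|B^\theta_{k,N}(z)| - \tfrac{1}{N}\log M_N \leq V_{K,Q,\theta}(z)$ on $\cd$, and since $M_N^{1/N}\to 1$ we obtain
$$\limsup_{N\to\infty}\sup_{k}|B^\theta_{k,N}(z)|^{1/N}\leq e^{V_{K,Q,\theta}(z)}.$$

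For the lower bound, I would take any $P\in\pi_{N,\theta}$ with $\|w^N P\|_K\leq 1$ and expand it in the orthonormal basis $\{B^\theta_{j,N}\}$ using the inner product $\langle f,g\rangle = \int f\bar g\,w^{2N}d\mu$. By Cauchy-Schwarz each coefficient satisfies $|\langle P,B^\theta_{j,N}\rangle|\leq \|w^N P\|_{L^2(\mu)}\cdot \|w^N B^\theta_{j,N}\|_{L^2(\mu)} \leq 1$, so
$$|P(z)|\leq \sum_{j=1}^{d(N,\theta)}|B^\theta_{j,N}(z)|\leq d(N,\theta)\sup_{j}|B^\theta_{j,N}(z)|.$$
Taking the supremum over all such $P$, taking the $N$-th root and using Theorem \ref{PolynomialGreen} (which says $V_{K,Q,\theta}(z)=\log\Phi_{K,Q,\theta}(z)$, i.e.\ $e^{V_{K,Q,\theta}(z)}$ is realized as the limit of suprema of $|P(z)|^{1/N}$ over $P\in\pi_{N,\theta}$ with $\|w^N P\|_K\leq 1$), we obtain
$$e^{V_{K,Q,\theta}(z)}\leq \liminf_{N\to\infty}(d(N,\theta))^{1/N}\sup_{j}|B^\theta_{j,N}(z)|^{1/N}.$$
Since $d(N,\theta)\leq d(N)\leq (N+d)^d$, we have $(d(N,\theta))^{1/N}\to 1$, which closes the two-sided estimate.

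There is no serious obstacle: the only real content beyond Theorem \ref{Asymptotic} is invoking Theorem \ref{PolynomialGreen} to identify $V_{K,Q,\theta}$ with $\log\Phi_{K,Q,\theta}$ (which requires $0\leq \theta<1$ and $w$ continuous, both hypothesized here), together with the remark that $\tfrac{1}{N}\log|P|\in L_\theta$ whenever $P\in\pi_{N,\theta}$, so that the $\theta$-incomplete extremal function is the right upper envelope. The only minor subtlety is at $z=0$ when $\theta>0$: there $V_{K,Q,\theta}(0)=-\infty$ and every $B^\theta_{k,N}$ vanishes at the origin since its expansion starts at degree $\lceil N\theta\rceil\geq 1$, so both sides equal $0$ and the identity holds trivially.
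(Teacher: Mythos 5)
Your proposal is correct and follows essentially the same route as the paper, which explicitly states that this theorem is obtained ``by the same proof as for Theorem \ref{Asymptotic}'' after replacing the full polynomial space by $\pi_{N,\theta}$ and invoking Theorem \ref{PolynomialGreen} in place of \eqref{REcovertheta=0}. Your added remarks — that $\frac{1}{N}\log|B^\theta_{k,N}|\in L_\theta$ makes it a legitimate competitor for $V_{K,Q,\theta}$, and that the formula is trivially $0=e^{-\infty}$ at the origin when $\theta>0$ — are accurate and fill in exactly the small points the paper leaves implicit.
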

\begin{cor}Let $0\leq \theta<1$. Let $\phi $ be a globally defined continuous admissible weight. If $(D_\phi,\mu,e^{-\phi})$  satisfies a weighted Bernstein-Markov property then we have
\begin{equation}
 \lim_{N\to\infty} \sup_{k=1, \dots,d(N,\theta)} (|B^\theta_{k,N}(z)|)^{1/N}=
 e^{V_{\phi,\theta}(z)},
\end{equation}
where $\{B^\theta_{k,N}\}_{k=1}^{d(N,\theta)}$ is an orthonormal basis for $\pi_{N,\theta}$  with respect to the measure $e^{-2N\phi}\mu $.
\end{cor}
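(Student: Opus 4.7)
The plan is to deduce this from the preceding theorem by taking the compact set $K := D_\phi$, weight $w := e^{-\phi}|_{D_\phi}$ (so $Q := \phi|_{D_\phi}$), and then identifying the resulting extremal function with the globally defined $V_{\phi,\theta}$.

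First I verify the hypotheses of the preceding theorem on $K = D_\phi$. Continuity of $\phi$ together with the growth bound $\phi(z) \geq (1+\eps)\log|z|$ for $|z|$ large force $D_\phi = \{V_\phi^\ast \geq \phi\}$ to be closed, bounded, and non-pluripolar, hence a compact set carrying $w = e^{-\phi}|_{D_\phi}$ as a continuous admissible weight. The assumed full weighted Bernstein--Markov property of $(D_\phi,\mu,e^{-\phi})$ implies, by the remark preceding the theorem, the weighted Bernstein--Markov property for $\theta$-incomplete polynomials on the same triple. The theorem therefore applies and gives
\[
\lim_{N\to\infty}\sup_{k=1,\dots,d(N,\theta)}|B^\theta_{k,N}(z)|^{1/N} \;=\; e^{V_{D_\phi,\phi|_{D_\phi},\theta}(z)}.
\]

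The only step requiring extra care is the identification $V_{D_\phi,\phi|_{D_\phi},\theta} = V_{\phi,\theta}$, and this is where I expect to spend the bit of work. My plan is a sandwich using $D_{\phi,\theta}\subset D_\phi$ from Corollary \ref{SupportPhi}(ii): comparing defining classes (imposing the pointwise condition $u\le\phi$ on a larger set produces a smaller extremal function) yields
\[
V_{\phi,\theta} \;\leq\; V_{D_\phi,\phi|_{D_\phi},\theta} \;\leq\; V_{D_{\phi,\theta},\phi|_{D_{\phi,\theta}},\theta}.
\]
The reduction already carried out in Section \ref{ThetaWeihgtedIncPP} --- combining $\supp(dd^c V_{\phi,\theta}^\ast)^d \subset D_{\phi,\theta}\cup\{0\}$ from Corollary \ref{SupportPhi}(i) with the $\theta$-incomplete domination principle (Theorem \ref{ThetaDominationPrinciple}) --- establishes $V_{D_{\phi,\theta},\phi|_{D_{\phi,\theta}},\theta}^\ast = V_{\phi,\theta}^\ast$, and continuity of $\phi$ strips the upper semicontinuous regularizations, so the two outer functions in the sandwich coincide. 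The sandwich collapses to the desired identification, and the corollary follows.
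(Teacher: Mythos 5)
Correct, and essentially the route the paper intends (it leaves this corollary's proof implicit): apply the preceding theorem with $K = D_\phi$, $Q = \phi|_{D_\phi}$, then identify $V_{D_\phi,\phi|_{D_\phi},\theta}$ with $V_{\phi,\theta}$ by the reduction established after Corollary \ref{SupportPhi}. Your sandwich argument bridging $D_\phi$ and $D_{\phi,\theta}$ via monotonicity of the defining families, together with the paper's identity $V^\ast_{D_{\phi,\theta},\phi|_{D_{\phi,\theta}},\theta}=V^\ast_{\phi,\theta}$ and $V_{\phi,\theta}=V^\ast_{\phi,\theta}$ for continuous $\phi$, correctly supplies the step the paper leaves unstated.
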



Finally, we  prove the strong Bergman asymptotics  in the weighted $\theta$-incomplete   setting following \cite{BermanCn}
closely. We fix $0\leq \theta<1$. Let $\phi $ be a globally defined admissible weight and $ \phi(z) \geq
(1+\eps)\log|z|$ if $|z| \gg 1$.  Let $\{p_1,\dots, p_{d(N,\theta)}\}$ be an orthonormal basis for $\pi_{N,\theta}$ with
respect to the inner product $\langle f, g\rangle := \int_{\cd} f \bar{g} e^{-2N\phi}\omega_d$ where $\omega_d(z)=(
dd^c|z|^2)^d/4^d d!$ on \cd. We denote the $L^2-$norm by
$||p_N||_{N\phi}^2:=||p_N||_{\omega_d,N\phi}^2=\int_{\cd} |p_N(z)|^2 e^{-2N\phi(z)}\omega_d(z)$. We define the
$N-$th $\theta$-incomplete  Bergman function by
\begin{equation}
K_N(z):=K_{N,\theta}^{\phi}(z,z)=\sum_{j=1}^{d(N,\theta)} |p_j(z)|^2e^{-2N\phi(z)}.
\end{equation}
By the reproducing property of the Bergman functions we have

\begin{equation}\label{ExtremalBergman}
K_N(z)=\sup_{p_N\in\pi_{N,\theta}\setminus \{0\}} |p_N(z)|^2e^{-2N\phi(z)}/||p_N||_{N\phi}^2 .
\end{equation}

\begin{thm}\label{SonTheorem} Let $\phi\in C^{2}(\cd)$ with $\phi (z) \geq (1+\epsilon)\log |z|$ for $|z| \gg 1$.
If $V_{\phi,\theta}\in C^{1,1}(\cd \setminus \{0\})$ then
$(dd^c V_{\phi,\theta})^d$ is absolutely continuous with respect to Lebesgue measure on $\cd\setminus\{0\}$ and  $\det (dd^c\phi)\omega_d=
(dd^c V_{\phi,\theta})^d$ on $\cd\setminus \{0\}$ as $(d,d)$ forms with $L^\infty_{loc}(\cd)$ coefficients. For a compact set
$K$ we have a local bound
\begin{equation}
\label{localbound} \frac{1}{d(N,\theta)}K_N(z)\leq C=C(K) \ \hbox{for} \ z\in K.
\end{equation}
Moreover we have
\begin{equation}\label{morse2}\frac{1}{d(N,\theta)}K_N\to \frac{1}{(1-\theta^d)}\chi_{D_{\phi,\theta}\cap P} \frac{\det (dd^c\phi)}{(2\pi)^d} \ \hbox{in} \  L^1(\cd)\end{equation}
and
\begin{equation}\quad \frac{1}{d(N,\theta)}K_N\omega_d \to\frac{1}{(1-\theta^d)} \frac{(dd^c V_{\phi,\theta})^d }{(2\pi)^d}\ \hbox{weak}-* \text{ on } D_{\phi,\theta}\cap P.\end{equation}
\end{thm}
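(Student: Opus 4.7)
The plan is to follow the approach of \cite{BermanCn} closely, adapting from the ordinary case $\theta=0$ to the $\theta$-incomplete setting. The four conclusions will be established in order; the main substantive new bookkeeping is the dimension ratio $d(N,\theta)/d(N)\to 1-\theta^d$, which accounts for the factor $1/(1-\theta^d)$ in the limit. Throughout, I would use the extremal characterization \eqref{ExtremalBergman} of $K_N$ as the main bridge between the $L^2$ theory and the pluripotential theory developed in the previous section.

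For the Monge--Amp\`ere identity on $D_{\phi,\theta}\cap P$: since $\phi$ is continuous, $V_{\phi,\theta}$ is continuous, and $V_{\phi,\theta}\leq\phi$ globally with equality on $D_{\phi,\theta}$. Thus $\phi-V_{\phi,\theta}\geq 0$ attains its minimum $0$ on $D_{\phi,\theta}$. The $C^{1,1}$ hypothesis on $V_{\phi,\theta}$ implies, by the standard touching-from-below argument, that at Lebesgue-a.e.\ point $z\in D_{\phi,\theta}$ the complex Hessian $\p\bar\p V_{\phi,\theta}(z)$ exists and equals $\p\bar\p\phi(z)$. Combined with Corollary \ref{SupportPhi}, which gives $\supp(dd^c V_{\phi,\theta})^d\subset D_{\phi,\theta}\cup\{0\}$, this yields the stated identity on $\cd\setminus\{0\}$ with $L^\infty_{loc}$ density.

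For the local bound \eqref{localbound}, I would use a submean value inequality: for any $p_N\in\pi_{N,\theta}$ and $z$ in a fixed compact $K$,
\[ |p_N(z)|^2\,e^{-2N\phi(z)} \leq C\,N^d \int_{B(z,r/\sqrt N)} |p_N|^2\,e^{-2N\phi}\,\omega_d, \]
with $C,r$ depending only on $K$ and $C^2$-bounds of $\phi$ near $K$. This is standard when $\phi\in C^2$: on a ball of radius $\sim 1/\sqrt N$, $\phi$ is nearly quadratic, so $|p_N|^2 e^{-2N\phi}$ is essentially subharmonic. Supremizing over unit-$L^2$-norm $p_N$ and dividing by $d(N,\theta)\sim(1-\theta^d)N^d/d!$ gives the bound. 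For the convergence statements, the strategy is three-fold: (i) Using $V_{\phi,\theta}=\log\Phi_{\phi,\theta}$ from the previous section together with a weighted Bernstein-Markov property for $(D_{\phi,\theta},\omega_d|_{D_{\phi,\theta}},e^{-\phi})$ (valid for smooth $\phi$), one shows $\tfrac{1}{2N}\log K_N(z)\to V_{\phi,\theta}(z)-\phi(z)$ pointwise on $\cd\setminus\{0\}$; outside $D_{\phi,\theta}$ this limit is strictly negative, forcing $K_N\to 0$ exponentially there. (ii) Mass conservation: $\int K_N\omega_d=d(N,\theta)$ by orthonormality, while
\[ \int_{D_{\phi,\theta}\cap P}\frac{\det(dd^c\phi)}{(2\pi)^d(1-\theta^d)}\omega_d = \frac{(dd^c V_{\phi,\theta})^d(\cd\setminus\{0\})}{(2\pi)^d(1-\theta^d)}=1, \]
since $V_{\phi,\theta}\in L_\theta^+$ has total Monge--Amp\`ere mass $(2\pi)^d$, carrying a $\theta^d(2\pi)^d$ atom at the origin from the $\theta\log|z|$ behavior. (iii) A sharp pointwise upper bound $\limsup \tfrac{1}{d(N,\theta)}K_N(z)\leq \det(dd^c\phi)(z)/[(1-\theta^d)(2\pi)^d]$ on $D_{\phi,\theta}\cap P$. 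Steps (i)--(iii) together with mass-matching give the weak-$\ast$ convergence, and the $L^\infty$ bound from \eqref{localbound} then upgrades this to $L^1$ via dominated convergence.

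The main obstacle is step (iii), the sharp pointwise upper bound on the diagonal Bergman kernel. The argument proceeds by local rescaling at $z_0\in D_{\phi,\theta}\cap P$: set $z=z_0+\zeta/\sqrt N$ and extract a subsequential limit, identifying the rescaled kernel with the diagonal Bergman kernel of a $\theta$-incomplete Bargmann--Fock type model on $\cd$ with quadratic weight given by the Hermitian form $dd^c\phi(z_0)$. An explicit calculation in the model yields $\det(dd^c\phi(z_0))/[(2\pi)^d(1-\theta^d)]$ on the diagonal at the origin, and the $C^{1,1}$ identification $V_{\phi,\theta}=\phi$ near $z_0$ ensures the $\theta$-incomplete structure survives in the scaling limit. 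Pushing this Bargmann--Fock computation through in the $\theta$-incomplete case, with uniform control to justify the subsequential limit, is the technical heart of the proof.
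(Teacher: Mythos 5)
Your overall scaffold --- pointwise Morse-type upper bound, mass matching, and a Fatou/dominated-convergence lemma to upgrade to $L^1$ --- is exactly the structure of the paper's proof, and your treatment of the Monge--Amp\`ere identity, the local bound \eqref{localbound}, and the mass computation $(dd^c V_{\phi,\theta})^d(\cd\setminus\{0\}) = (1-\theta^d)(2\pi)^d$ are all correct and match the paper. However, step (iii) --- which you correctly identify as the technical heart --- contains a conceptual error that would derail the argument if you tried to carry it out. You claim that the $C^{1,1}$ identification $V_{\phi,\theta}=\phi$ near $z_0$ ``ensures the $\theta$-incomplete structure survives in the scaling limit,'' leading you to a ``$\theta$-incomplete Bargmann--Fock type model.'' This is not what happens. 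After the rescaling $z = z_0 + \zeta/\sqrt N$ at a point $z_0 \neq 0$, the constraint that $p_N$ has no monomials $z^\alpha$ with $|\alpha| < \lceil\theta N\rceil$ does not translate into any vanishing condition in $\zeta$; the translation thoroughly mixes degrees, and the scaling limit space is the \emph{ordinary} Bargmann--Fock space. The factor $1/(1-\theta^d)$ does not emerge from the model kernel at all --- it comes purely from dividing by $d(N,\theta) \asymp (1-\theta^d)N^d/d!$ rather than by $N^d/d!$. The paper sidesteps the whole issue: it never identifies a scaling limit, but only proves the one-sided $\limsup$ bound by applying the subaveraging inequality for the subharmonic function $|p_N|^2$ against the rescaled Gaussian probability measure on $B(z_0, R/\sqrt N)$, then sending $R\to\infty$. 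The $\theta$-incompleteness enters only through the dimension count; the argument is otherwise identical to the $\theta=0$ case of \cite{BermanCn}. Pursuing a full scaling-limit identification, as you propose, would require proving convergence of the rescaled $\theta$-incomplete Bergman kernels to the full Bargmann--Fock kernel (a statement about fullness of the limit that is true but nonobvious), whereas the one-sided subaveraging bound is essentially free.

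A secondary gap is in your step (i). You propose showing $\frac{1}{2N}\log K_N \to V_{\phi,\theta}-\phi$ via a Bernstein--Markov property for the triple $(D_{\phi,\theta},\omega_d|_{D_{\phi,\theta}},e^{-\phi})$. But $K_N$ is built from an orthonormal basis for $L^2(\cd, e^{-2N\phi}\omega_d)$, a global, $N$-dependent measure, not $L^2$ of a fixed measure on $D_{\phi,\theta}$. Reducing one to the other requires a concentration estimate showing that, for $\theta$-incomplete polynomials, the $L^2(e^{-2N\phi}\omega_d)$-mass lives essentially on $D_{\phi,\theta}$; this is true but you have not addressed it. The paper's route is cleaner and bypasses Bernstein--Markov entirely: once the pointwise Morse bound \eqref{oldmorse2} is in hand, the extremal-function inequality $\frac{1}{2N}\log|p_N| \leq V_{\phi,\theta} + \frac{1}{2N}\log C_N$ on $\cd$ immediately gives $\frac{1}{N^d}K_N(z) \leq C_N e^{-2N(\phi(z)-V_{\phi,\theta}(z))}$, which decays exponentially off $D_{\phi,\theta}$. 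Combined with the growth hypothesis $\phi(z) \geq (1+\epsilon)\log|z|$ this gives a global dominating function, and \eqref{offd} follows by dominated convergence. I would recommend replacing both your step (i) and your step (iii) with these more elementary arguments; your steps (ii) and the final upgrade via Lemma \ref{MeasureLemma} are fine as written.
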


Here $\det (dd^c u) :=\frac{(dd^c  u)^d}{\omega_d}$ and for a twice continuously differentiable function $u$ we have
$\det (dd^c u) =2 i \det [ \frac{\p^2 u}{\p z_j \p \bar{z}_k}]_{j,k=1,\dots ,d}$. The characteristic function of a set $A$
is denoted by $\chi_A$. We remark that we assume  $V_{\phi,\theta}\in C^{1,1}(\cd \setminus\{0\})$.

We will use  the following lemma from measure theory in the proof of the theorem.
\begin{lem}\label{MeasureLemma}\cite[Lemma 2.2]{BermanToeplitz}
Let $(X,\mu)$ be a measure space and let $\{f_N\}$ be a sequence of uniformly bounded, integrable functions on $X$. If
$f$ is a bounded, integrable function on $X$ with
\begin{enumerate}
\item $\lim_{N\to \infty} \int_X f_N d\mu =  \int_X f d\mu$ and
\item $\limsup_{N\to \infty} f_N \leq f$ a.e. with respect to $\mu$
\end{enumerate}
then $f_N$  converges to  $f$ in $L^1(X,\mu)$.
\end{lem}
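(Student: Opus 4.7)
The plan is to follow the arguments of \cite{BermanCn} (specifically Proposition 2.1 and Theorem 1.2 there), adapted to the $\theta$-incomplete setting. The paper has assembled the required tools: Corollary \ref{SupportPhi} for the support of $(dd^c V_{\phi,\theta}^\ast)^d$, the extremal characterization \eqref{ExtremalBergman}, and Lemma \ref{MeasureLemma} for upgrading to $L^1$ convergence. I treat the three assertions in order.

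For the Monge--Amp\`ere identity on $D_{\phi,\theta}\cap P$: since $\phi$ is continuous, $V_{\phi,\theta}=\phi$ on $D_{\phi,\theta}$, and since $V_{\phi,\theta}\leq\phi$ on $\cd$ with both being $C^1$ on $\cd\setminus\{0\}$, the two functions touch tangentially on $D_{\phi,\theta}\setminus\{0\}$, giving $\nabla V_{\phi,\theta}=\nabla\phi$ there. The $C^{1,1}$ hypothesis gives $L^\infty_{loc}$ distributional second derivatives plus pointwise Hessians a.e.\ (Rademacher applied to $\nabla V_{\phi,\theta}$), and at Lebesgue density points of $D_{\phi,\theta}\cap P$ the Hessian of $V_{\phi,\theta}$ must equal that of $\phi$, exactly as in Proposition 2.1 of \cite{BermanCn}. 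Outside $D_{\phi,\theta}$, the extremal $V_{\phi,\theta}$ is maximal by Corollary \ref{SupportPhi}(iv), hence $(dd^c V_{\phi,\theta})^d=0$ there. This yields both the identity $\det(dd^c\phi)\omega_d=(dd^c V_{\phi,\theta})^d$ on $D_{\phi,\theta}\cap P$ and the absolute continuity on $\cd\setminus\{0\}$.

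The local bound \eqref{localbound} follows from the standard Bergman-ball rescaling argument. Using \eqref{ExtremalBergman} it suffices to bound $|p_N(z)|^2e^{-2N\phi(z)}/\|p_N\|_{N\phi}^2$ by $Cd(N,\theta)$ locally uniformly. Applying the sub-mean value inequality for $|p_N|^2$ on balls of radius $1/\sqrt{N}$ around $z$ and absorbing the linear part of $\phi$ via the Taylor expansion $\phi(w)=\phi(z)+\langle\nabla\phi(z),w-z\rangle+O(|w-z|^2)$ (the linear part being handled by writing $|p_N(w)|^2e^{-2N\phi(w)}$ and noting $Nr^2=O(1)$ for $r=1/\sqrt{N}$) gives $|p_N(z)|^2e^{-2N\phi(z)}\leq C\|p_N\|_{N\phi}^2$, and summing over an orthonormal basis yields \eqref{localbound}.

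The heart of the proof is \eqref{morse2}, obtained by applying Lemma \ref{MeasureLemma} to $f_N=K_N/d(N,\theta)$ and $f=\chi_{D_{\phi,\theta}\cap P}\det(dd^c\phi)/((1-\theta^d)(2\pi)^d)$. The uniform bound hypothesis is exactly \eqref{localbound}. For the integrals: orthonormality gives $\int_\cd K_N\,\omega_d=d(N,\theta)$, so $\int f_N\,\omega_d=1$; on the other hand, by the first step together with Corollary \ref{SupportPhi}(i),
\[
\int_\cd f\,\omega_d=\frac{1}{(2\pi)^d(1-\theta^d)}\int_{\cd\setminus\{0\}}(dd^c V_{\phi,\theta})^d=1,
\]
using that $V_{\phi,\theta}\in L_\theta^+$ has total Monge--Amp\`ere mass $(2\pi)^d$ on $\cd$ (from the $\log|z|$ growth at infinity) with point mass $(2\pi\theta)^d$ at the origin (from the $\theta\log|z|$ singularity), so the mass on $\cd\setminus\{0\}$ is $(2\pi)^d(1-\theta^d)$. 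For the pointwise limsup hypothesis, off $D_{\phi,\theta}$ we have $V_{\phi,\theta}<\phi$ strictly, and a Bernstein--Walsh bound via Theorem \ref{PolynomialGreen} yields $|p_N(z)|^2e^{-2N\phi(z)}/\|p_N\|_{N\phi}^2\leq Ce^{2N(V_{\phi,\theta}(z)-\phi(z))}\to 0$, so $K_N/d(N,\theta)\to 0$; at interior points $z_0\in D_{\phi,\theta}\cap P$ one rescales $p_N$ around $z_0$ by $\sqrt{N}$ to produce in the limit the Bargmann--Fock Bergman kernel associated to the positive Hermitian form $dd^c\phi(z_0)$, whose diagonal equals $\det(dd^c\phi(z_0))/(2\pi)^d$, and dividing by the asymptotic dimension ratio $d(N,\theta)/N^d\to(1-\theta^d)/d!$ produces the prefactor $\tfrac{1}{1-\theta^d}$ and the correct overall normalization. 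Lemma \ref{MeasureLemma} then delivers $L^1$ convergence, and the weak-$*$ convergence of $K_N\omega_d/d(N,\theta)$ on $D_{\phi,\theta}\cap P$ follows immediately from $L^1$ convergence together with \eqref{localbound}. The main obstacle is the local Bergman-kernel asymptotic inside $D_{\phi,\theta}\cap P$: one must verify that the $\theta$-incomplete degree restriction $|\alpha|\geq\lceil N\theta\rceil$ affects only the overall dimension count (producing the $1-\theta^d$ factor in the limit) and not the local Fock-model asymptotic at points where $\phi$ strictly dominates $\theta\log|z|+O(1)$, so that Berman's local comparison argument survives.
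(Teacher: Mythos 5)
Your proposal does not address the statement you were asked to prove. The statement is Lemma \ref{MeasureLemma}, a self-contained measure-theoretic fact (cited in the paper from Lemma 2.2 of \cite{BermanToeplitz}): uniform boundedness, convergence of the integrals, and an almost-everywhere $\limsup$ upper bound together force $L^1$ convergence. What you have written instead is a proof outline for Theorem \ref{SonTheorem}, the strong Bergman asymptotics. Worse, your outline explicitly \emph{invokes} Lemma \ref{MeasureLemma} in its final step ("The heart of the proof is \eqref{morse2}, obtained by applying Lemma \ref{MeasureLemma}\dots"), so as an argument for the assigned statement it is circular: the lemma is used, not proved. Nothing in your text engages with the actual content of the lemma --- why a one-sided pointwise bound plus matching total integrals upgrades to norm convergence.

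For the record, the lemma itself has a short proof that you should be able to supply: writing $\|f_N-f\|_{L^1(\mu)}=2\int_X (f_N-f)^+\,d\mu-\int_X(f_N-f)\,d\mu$, the second term tends to $0$ by hypothesis (1), while hypothesis (2) gives $(f_N-f)^+\to 0$ $\mu$-a.e., so the first term tends to $0$ by dominated convergence (reverse Fatou), using the uniform bound on $f_N$ together with integrability to produce a dominating integrable function; in the paper's application this domination is exactly what \eqref{localbound} and \eqref{ForLebesgue} provide. As for the material you did write: it tracks the paper's proof of Theorem \ref{SonTheorem} fairly closely in outline, but it is the answer to a different question, and one small slip there is worth flagging --- the point mass of $(dd^c V_{\phi,\theta})^d$ at the origin for $u\in L_\theta^+$ is $(2\pi)^d\theta^d$, which is what makes the off-origin mass $(2\pi)^d(1-\theta^d)$; writing it as $(2\pi\theta)^d$ happens to agree only because the two expressions coincide, so be careful with that normalization if you rework the argument.
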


\begin{proof}[Proof of Theorem \ref{SonTheorem}]
The  $\theta=0$ case is proven by Berman in  \cite{BermanCn}, so we assume  $0<\theta<1$.

By assumption $V_{\phi,\theta}=\phi$ on  $D_{\phi,\theta}\cap P$ and both are $C^{1,1}$ on $D_{\phi,\theta}\cap P$.
Therefore  $\det (dd^c\phi)\omega_d= (dd^c V_{\phi,\theta})^d$ on $D_{\phi,\theta}\cap P$ almost everywhere as $(d,d)$ forms with $L^\infty$ coefficients by the
argument  in Section 12 of \cite{DemaillyPotentialTheoryinSeveralComplexVariables}.

First of all using \eqref{ExtremalBergman} to prove an asymptotic upper bound on $\frac{1}{d(N,\theta)}K_N(z)$ at a point $z_0=(z_1^0,\dots, z_d^0) $, we can assume that  near $z_0$, $\phi$ is of the form
\begin{equation}\phi(z) =\sum_{j=1}^d\lambda_j |z_j-z_j^0|^2 +0(|z-z_0|^3)\end{equation}
as in \cite{BermanCn}. Namely we assume  that $\phi(z_0)=0$ and the first order partial derivatives of $\phi$ vanish at ~$z_0$.

Following \cite{BermanCn}, we have  for each $z_0\in \cd$ there exist $R>0$ and a constant $C$ such that
\begin{equation}
\label{FirstBound}
|\phi(z)|\leq C|z-z_0|^2 \ \hbox{on} \ B(z_0,R),
\end{equation}
and for any $R>0$ we have
\begin{equation}
\label{SecondBound} \lim_{N\to \infty} \left[\sup_{z\in B(0,R)} \left|N\phi(z/\sqrt
N+z_0)-\sum_{j=1}^d\lambda_j|z_j|^2\right|\right]=0.
\end{equation}

We fix $z_0$ be a point in \cd. We take a %
polynomial $p_N\in \pi_{N,\theta}$ satisfying the extremal property \eqref{ExtremalBergman} at $z_0$. Then we have
$$\frac{1}{d(N,\theta)}K_N(z_0) = \frac{|p_N(z_0)|^2e^{-2N\phi(z_0)}}{{d(N,\theta)}||p_N||_{N\phi}^2}=
\frac{|p_N(z_0)|^2}{{d(N,\theta)}\int_{\cd}
|p_N(z)|^2 e^{-2N\phi(z)}\omega_d(z)}$$
By positivity of the integrand we have
$$ \frac{1}{d(N,\theta)}K_N(z_0)\leq  \frac{|p_N(z_0)|^2}{{d(N,\theta)}\int_{|z-z_0|\leq R/\sqrt N} |p_N(z)|^2 e^{-2N\phi(z)}\omega_d(z)}.%
$$
We choose $R$ as in (\ref{FirstBound}) so that we can replace $\phi(z)$ by $C|z-z_0|^2$ in the integrand and thus we have
$$\frac{1}{d(N,\theta)}K_N(z_0)\leq \frac{|p_N(z_0)|^2}{d(N,\theta)\int_{|z-z_0|\leq R/\sqrt N} |p_N(z)|^2 e^{-2NC|z-z_0|^2}\omega_d(z)}.$$
We apply the subaveraging property to the subharmonic function $|p_N|^2$ on the ball $\{|z-z_0|\leq R/\sqrt N\}$
with respect to the radial  probability measure with center $z_0$ $\frac{e^{-2NC|z-z_0|^2}\omega_d(z)}{\int_{|z-z_0|\leq R/\sqrt N}
e^{-2NC|z-z_0|^2}\omega_d(z)}$ to obtain
\begin{eqnarray}
\nonumber  \frac{1}{d(N,\theta)}K_N(z_0) &\leq& \frac{1}{d(N,\theta)\int\limits_{|z-z_0|\leq R/\sqrt N} e^{-2NC|z-z_0|^2}\omega_d(z)} \\
\nonumber   &\leq& \frac{N^d}{d(N,\theta)\int_{|z'|\leq R}  e^{-2C|z'|^2}\omega_d(z')}
\end{eqnarray}
For the  last inequality we used  a change of variable $ z\to z':=(z-z_0)\sqrt N$, where $\omega_d(z') = N^d\omega_d(z)$.
Since  $d(N,\theta) \asymp (1-\theta^d) d(N,0)$, we have  $ d(N,\theta) \geq (1-\tilde{\theta}^d) d(N,0)$ for all $N\geq N_0$ for some   $\widetilde{\theta}\geq \theta$. Now using the estimate $d(N,\theta) \geq (1-\tilde{\theta}^d) d(N,0)=(1-\tilde{\theta}^d){d+N\choose d}\geq
(1-\tilde{\theta}^d)N^d/d!$ for all $N\geq N_0$, we get
$$\frac{1}{d(N,\theta)}K_N(z_0) \leq  \frac{d!}{(1-\tilde{\theta}^d) \int_{|z'|\leq R}  e^{-2C|z'|^2}\omega_d(z')}\text{ for all } N\geq N_0 .$$
The right hand side of the inequality is uniformly bounded.
As $z_0$ varies on the compact set $K$, we get  a constant $C(K)$ giving a
local bound  for all $N\geq N_0$.    By continuity of $\frac{1}{d(N,\theta)}K_N(z)$, and considering the $\max_{N=1,\cdots,N_0}\sup_{z\in K}\frac{1}{d(N,\theta)}K_N(z)$  we  get the local bound
\eqref{localbound} holds at each point of $K$.

For the rest of the proof, we fix $z_0$ and start with the inequality
$$\frac{1}{d(N,\theta)}K_N(z_0) \leq \frac{|p_N(z_0)|^2}{{d(N,\theta)} \int_{|z-z_0|\leq R/\sqrt N} |p_N(z)|^2 e^{-2N\phi(z)}\omega_d(z)}$$
which holds for any $R>0$. By using the same change of variable and estimates as above  we get
$$ \frac{1}{d(N,\theta)}K_N(z_0)  \leq \frac{d!|p_N(z_0)|^2
  }{(1-\tilde{\theta}^d)\int_{|z'|\leq R} |p_N(z'/\sqrt N +z_0)|^2 e^{-2N\phi(z'/\sqrt N
  +z_0)}\omega_d(z')},$$ for all $N\geq N_0$ where $\tilde{\theta}\geq \theta$.
Multiplying the integrand by $e^{-2\sum_{j=1}^d\lambda_j|z'_j|^2} e^{2\sum_{j=1}^d\lambda_j|z'_j|^2}$ and taking the infimum of
$\exp \left[-2 \left|N\phi(z'/\sqrt N)-\sum_{j=1}^d \lambda_j|z'_j|^2\right|\right]$ on $B(0,R)$ out of the integral, we get
$$  \frac{1}{d(N,\theta)}K_N(z_0) \leq   \frac{d!|p_N(z_0)|^2\exp \left[2\sup_{|z'|\leq R} \left|N\phi(z'/\sqrt N)-\sum_{j=1}^d
\lambda_j|z'_j|^2\right|\right]}{(1-\tilde{\theta}^d)\int_{|z'|\leq R}  |p_N(z'/\sqrt N +z_0)|^2 e^{-2\sum_{j=1}^d\lambda_j|z'_j|^2}
\omega_d(z')},$$ for all $N\geq N_0$.
We apply  the subaveraging property to the subharmonic function $|p_N(z'/\sqrt N +z_0)|^2$  with respect to radial
probability measure $\frac{e^{-2\sum_{j=1}^d\lambda_j|z'_j|^2}\omega_d(z')}{\int_{|z'|\leq R}
e^{-2\sum_{j=1}^d\lambda_j|z'_j|^2}\omega_d(z')}$ and we get
$$\frac{1}{d(N,\theta)}K_N(z_0) \leq  \frac{d!\exp \left[2\sup_{|z'|\leq R} \left|N\phi(z'/\sqrt N)-\sum_{j=1}^d
\lambda_j|z'_j|^2\right|\right]}{(1-\tilde{\theta}^d)\int_{|z'|\leq R}  e^{-2\sum_{j=1}^d\lambda_j|z'_j|^2}\omega_d(z')}$$ for all $N\geq N_0$.
By   \eqref{SecondBound},   $\exp \left[2\sup_{|z'|\leq
R} \left|N\phi(z'/\sqrt N)-\sum_{j=1}^d\lambda_j|z'_j|^2\right|\right] \to 1$ as $N\to \infty$. Therefore we have
$$  \limsup_{N\to \infty} \frac{1}{d(N,\theta)}K_N(z_0)\leq \frac{d!}{{(1-\tilde{\theta}^d)\int_{|z'|\leq R}  e^{-2\sum_{j=1}^d\lambda_j|z'_j|^2}\omega_d(z')}}.$$
As $R\to \infty$ the Gaussian integral on the right hand side goes to $\frac{\pi^d}{ 2^d\lambda_1 \cdots \lambda_d}$ if
all $\lambda_j >0$ and to $+\infty$ otherwise. Since $\det (dd^c\phi(z_0))= 4^d d! \lambda_1 \cdots \lambda_d$ we
have
\begin{equation}
\limsup_{N\to \infty} \frac{1}{d(N,\theta)}K_N(z)\leq \frac{1}{(1-\tilde{\theta}^d)}\chi_{{D_{\phi,\theta}}\cap
P}\frac{\det (dd^c\phi)}{(2\pi)^d} \text{ a.e on } \cd.
\end{equation}
Letting $\tilde{\theta} \to \theta$ we obtain
\begin{equation}
\label{morse} \limsup_{N\to \infty} \frac{1}{d(N,\theta)}K_N(z)\leq \frac{1}{(1-\theta^d)}\chi_{{D_{\phi,\theta}}\cap
P}\frac{\det (dd^c\phi)}{(2\pi)^d} \text{ a.e on } \cd.
\end{equation}
By the definition of $\limsup$  and using the extremal property \eqref{ExtremalBergman}, we get
\begin{equation}
\label{oldmorse2} \frac{1}{N^d}|p_N(z)|^2e^{-2N\phi(z)}/||p_N||_{N\phi}^2\leq C_N  \text{ on } D_{\phi,\theta} \text{
for any  } p_N\in \mathcal \pi_{N,\theta},
\end{equation}
where $C_N = \frac{1}{(1-\theta^d)}\sup_{z\in D_{\phi,\theta}\cap P}\frac{\det (dd^c\phi(z))}{(2\pi)^d}$. Next we will
show that
\begin{equation}
\label{DominationMorse}
 \frac{1}{N^d}K_N(z)\leq C_Ne^{-2N(\phi(z)-V_{\phi,\theta}(z))} \ \hbox{on} \ \cd .
\end{equation}
Let $p_N\in \pi_{N,\theta}$ such that $\|p_N\|^2_{N\phi}=N^{-d}$, then  by \eqref{oldmorse2} we have
$$  |p_N(z)|^2e^{-2N\phi(z)}\leq C_N \ \hbox{on} \ D_{\phi,\theta}.$$
By taking logarithms we get
$$\frac{1}{2N}\log |p_N(z)|^2\leq \phi(z)+\frac{1}{2N}\log C_N  \ \hbox{on} \ D_{\phi,\theta}$$
and thus we have
$$\frac{1}{2N}\log |p_N(z)|^2\leq V_{\phi,\theta}(z)+\frac{1}{2N}\log C_N  \ \hbox{on} \ \cd .$$
So from the extremal property of Bergman functions \eqref{ExtremalBergman} we obtain
$$ \frac{1}{N^d}K_N(z)=\sup_{||p_N||^2_{N\phi}=N^{-d}} |p_N(z)|^2e^{-2N\phi(z)}\leq C_Ne^{-2N(\phi(z)-V_{\phi,\theta}(z))} \ \hbox{ on}  \ \cd.$$
Since $\phi(z)>V_{\phi,\theta}(z)$ on $\cd \setminus D_{\phi,\theta}$, we obtain
$$\lim_{N\to\infty}\frac{1}{N^d}K_N(z)=0 \hbox{ on }   \cd \setminus D_{\phi,\theta}.$$
Using $d(N,\theta)\asymp(1-\theta^d)d(N,0)$, we obtain
$$\lim_{N\to\infty}\frac{1}{d(N,\theta)}K_N(z)=0 \hbox{ on }   \cd \setminus D_{\phi,\theta}.$$
From (\ref{DominationMorse}) and the growth assumption on $\phi$, for a sufficiently large $R$, there is a $C$ with
\begin{equation}\label{ForLebesgue}
\frac{1}{N^d}K_N(z)\leq C|z|^{-2N\epsilon} \ \hbox{for} \ |z|>R.%
\end{equation}
By combining the local bound \eqref{localbound} and  above estimate \eqref{ForLebesgue} we get a global bound for  $\frac{1}{d(N,\theta)}K_N$.
Therefore  Lebesgue's  dominated convergence theorem gives that
\begin{equation}\label{offd}
\lim_{N \to \infty}\int_{\cd \setminus D_{\phi,\theta}}\frac{1}{d(N,\theta)}K_N\omega_d =0.
\end{equation}
Next we show that
\begin{equation}
\label{IntegralEq} \lim_{N\to \infty}\int_{D_{\phi,\theta}\cap P}\frac{1}{d(N,\theta)}K_N\omega_d
=\frac{1}{(1-\theta^d)}\int_{D_{\phi,\theta}\cap P} \frac{\det ( dd^c\phi)}{(2\pi)^d}\omega_d.
\end{equation}
To prove (\ref{IntegralEq}), we know that
$$\int_{\cd}K_N\omega_d = d(N,\theta) $$
and using (\ref{offd}) we have
$$1 = \lim_{N\to \infty}\int_{\cd}\frac{1}{d(N,\theta)}K_N\omega_d= \lim_{N\to \infty}\int_{D_{\phi,\theta}\cap P}\frac{1}{d(N,\theta)}K_N\omega_d.$$
On the other hand, using the positivity of the integrand and applying \eqref{morse} on  $D_{\phi,\theta}$, we have
$$1 = \lim_{N\to \infty}\int_{D_{\phi,\theta}}\frac{1}{d(N,\theta)}K_N\omega_d\leq \frac{1}{(1-\theta^d)}\int_{D_{\phi,\theta}\cap P}\frac{ \det (dd^c\phi) }{(2\pi)^d}\omega_d.$$
By  the first part of this theorem, we can replace $\det (dd^c\phi)\omega_d$ by $(dd^c V_{\phi,\theta})^d$  which has
total mass ${(2\pi)^d}{(1-\theta^d)}$ on $D_{\phi,\theta}\cap P$, hence we have
$$ 1 = \lim_{N\to \infty}\int_{D_{\phi,\theta}\cap P}\frac{1}{d(N,\theta)}K_N\omega_d\leq \frac{1}{(1-\theta^d)}\int_{D_{\phi,\theta}\cap P}\frac{(dd^c V_{\phi,\theta})^d}{(2\pi)^d} =\frac{{(2\pi)^d}{(1-\theta^d)}}{(2\pi)^d(1-\theta^d) }=1.$$
This gives (\ref{IntegralEq}). We will use this relation, together with \eqref{morse2}, to show that
$$\frac{1}{d(N,\theta)}K_N \to \frac{1}{(1-\theta^d)}\chi_{D_{\phi,\theta}\cap P}\frac{\det (dd^c\phi)}{(2\pi)^d} \
\hbox{in} \ L^1(\cd).$$

We set $f_N := \frac{1}{d(N,\theta)}K_N$ and $f:= \frac{1}{(1-\theta^d) }\chi_{D_{\phi,\theta}\cap P}\frac{\det
(dd^c\phi)}{(2\pi)^d}$. By  the upper bound \eqref{morse} we have $\limsup\limits_{N\to \infty} f_N \leq ~f$  almost everywhere
  and by \eqref{offd}  and \eqref{IntegralEq}
 we have $\lim_{N\to \infty} \int_{\cd} f_N \omega_d =  \int_{\cd}f \omega_d$. Thus by Lemma~ \ref{MeasureLemma}  we get the convergence of
$\frac{1}{d(N,\theta)}K_N $ to $\frac{1}{(1-\theta^d)} \chi_{D_{\phi,\theta}\cap P}\frac{\det (dd^c\phi)}{(2\pi)^d}$ in
$L^1(\cd)$. This implies  the weak-* convergence of $\frac{1}{d(N,\theta)}K_N\omega_d$ to
$\frac{1}{(1-\theta^d)}\chi_{D_{\phi,\theta}\cap P}\frac{\det (dd^c\phi)}{(2\pi)^d}\omega_d$ and completes the proof
of the theorem.
\end{proof}

 \nocite{*}
\bibliographystyle{amsalpha}

\bibliography{xbib}
\end{document}